\def\tablenotes{\bgroup\parfillskip=0pt plus 1fil
\leftskip=0pt\relax \rightskip=0pt
\vskip2pt\footnotesize}
\def\endtablenotes{\vskip1pt\egroup}
\newtheorem{theorem}{Theorem}[section]
\newtheorem{proposition}[theorem]{Proposition}
\newtheorem{lemma}[theorem]{Lemma}
\newcommand{\TimeDeriv}{\frac{\textrm{d}}{\textrm{dt}}}
\renewcommand{\leq}{\leqslant}
\renewcommand{\geq}{\geqslant}
\renewcommand{\d}{\mathrm{d}}
\renewcommand{\phi}{\varphi}
\renewcommand{\epsilon}{\varepsilon}
\numberwithin{equation}{section}
\renewcommand{\@biblabel}[1]{#1\hfill \hspace{-0.1cm}}
\title{Distributed Delay Differential Equation Representations of Cyclic Differential Equations \thanks{Submitted to the editors DATE.
\funding{Portion of this work were performed under the auspices of the U.S. Department of Energy under contract 89233218CNA000001 and funded by NIH grants R01-AI116868 and R01-OD011095. }}}
\begin{document}
 \title{Distributed Delay Differential Equation Representations of Cyclic Differential Equations }
 \author{ Tyler Cassidy  \\
          Theoretical Biology and Biophysics, Los Alamos National Laboratory \\     
          Los Alamos, NM, USA  87545\\  
           tcassidy@lanl.gov  }
\maketitle

\begin{abstract}
{Compartmental ordinary differential equation (ODE) models are used extensively in mathematical biology. When transit between compartments occurs at a constant rate, the well-known linear chain trick can be used to show that the ODE model is equivalent to an Erlang distributed delay differential equation (DDE). Here, we demonstrate that compartmental models with non-linear transit rates and possibly delayed arguments are also equivalent to a scalar distributed delay differential equation. To illustrate the utility of these equivalences, we calculate the equilibria of the scalar DDE, and compute the characteristic function-- without calculating a determinant. We derive the equivalent scalar DDE for two examples of models in mathematical biology and use the DDE formulation to identify physiological processes that were otherwise hidden by the compartmental structure of the ODE model.\\
 \textbf{Keywords:} Infinite delay equation, Mathematical biology, Delay differential equations, Linear chain trick }
\end{abstract}

\section{Introduction}\label{Sec:Introduction}

Multi-compartment models,  where changes in one population propagate through a chain of successive stages, have been used extensively in mathematical biology. Examples include inhibitory (and excitatory) neuronal feedback loops \citep{Eurich2002,Mackey1984,Foss2000,Milton2011}, cellular reproduction \citep{deSouza2017,Roskos2006,Quartino2014,Billy2014,Cassidy2018}, enzymatic production \citep{Goodwin1965,Yildirim2004,Alrikaby2019}, infectious disease epidemiology \citep{Kermack1927,Mckendrick1925,Inaba2008,Champredon2018} and many others. It is well established that, when the relationship between stages is linear, these compartmental models ``hide'' delays \citep{Vogel1961,deSouza2017,Smith2011,Gurney1986,Cassidy2018a}. Recently, there has been increased interest establishing the equivalence between models that explicitly include delays, like renewal or distributed delay differential equations (DDEs), and multi-stage ordinary differential equation (ODE) models \citep{Champredon2018,Diekmann2017,Diekmann2020,Diekmann2020a,Hurtado2019}.  

In general, these multi-stage models follow a chain-like structure, with one population influencing the next. When there is feedback between the first and last populations, these chain-like structures close and   become cyclic. Here, we formalize the relationship between these cyclic differential equations and distributed DDEs. Specifically, we establish the equivalence between a scalar distributed DDE and the general, possibly delayed, cyclic differential equation
\begin{equation}
\TimeDeriv x_i(t) = f_i\left( \int_0^{\infty} x_{i-1}(t-\phi)K_{i}(\phi)\d \phi \right) - \left( \mu(x_n(t))  \right)   x_i(t), \quad \textrm{for} \quad i = 1,...n,
\label{Eq:GenericCyclicDifferentialEquation}
\end{equation}
and the indices $i$ are taken mod $n$. Equation~\eqref{Eq:GenericCyclicDifferentialEquation} includes the integral term
\begin{align*}
\int_0^{\infty} x_{i}(t-\phi)K_{i}(\phi)\d \phi ,
\end{align*} 
where each $K_{i}(\phi)$ is a probability density function (PDF). Thus, we study the relationship between scalar distributed DDEs and multi-stage models that potentially include a delay. In particular, two specific formulations of \eqref{Eq:GenericCyclicDifferentialEquation} have been extensively studied. First, by choosing $K_{i}(\phi) = \delta(\phi-\tau_{i})$, \eqref{Eq:GenericCyclicDifferentialEquation} becomes a system of cyclic discrete DDEs with delays given by $\tau_{i}$ given by
\begin{equation}
\TimeDeriv x_i(t)  = f_i\left( x_{i-1}(t-\tau_{i}) \right) - \left( \mu(x_n(t)) \right)   x_i(t) \quad \textrm{for} \quad i = 1,...n,
\label{Eq:DiscreteDelayCyclicDifferentialEquation}
\end{equation}
where, once again, the indices $i$ are taken mod $n$.
The system~\eqref{Eq:DiscreteDelayCyclicDifferentialEquation} has been studied in depth by a number of authors \citep{Braverman2019,Ivanov2018,MalletParet1996}. Theoretical results include a Poincar\'{e}-Bendixson theorem for the discrete system of DDEs \eqref{Eq:DiscreteDelayCyclicDifferentialEquation} when $\mu(s) = 0$ \citep{MalletParet1996}, and the existence of periodic solutions of \eqref{Eq:DiscreteDelayCyclicDifferentialEquation} under modest assumptions on the specific feedback functions $f_i$ \citep{Braverman2019,Ivanov2018}.  We consider a particular example of \eqref{Eq:DiscreteDelayCyclicDifferentialEquation}, used in the context of lac-operon dynamics \citep{Yildirim2004}, in Section~\ref{Sec:Examples}.

Conversely, \eqref{Eq:GenericCyclicDifferentialEquation} is quite common in mathematical modelling in the delay free case: after setting $K_{i}(\phi) = \delta(\phi),$ the delay in \eqref{Eq:GenericCyclicDifferentialEquation} vanishes, and the system becomes a multi-compartment ODE. Then, the equivalence of an Erlang, or gamma type distribution with an integer shape parameter,   distributed DDE and a system of ODEs has been known since at least the 1960s \citep{Vogel1961}. The linear chain trick, or linear chain technique (LCT), establishes the equivalence between Erlang distributed DDEs and transit compartment ODE models with constant transition rate \citep{Smith2011,MacDonald1978}. Recently, a number of authors have generalized the LCT to other distributions and model formulations \citep{Diekmann2020,Diekmann2020a,Hurtado2019}. Often, these transit compartment ODE models take the form
\begin{equation}
\left \{
\begin{aligned}
\TimeDeriv x_1(t) & =   \frac{ \beta( x_n(t) ) }{V}-V x_1(t) \\ 
\TimeDeriv x_i(t) & =  V \left[ x_{i-1}(t) - x_{i}(t)\right]  \quad \textrm{for} \quad i = 2,3,...,n-1, \\
\TimeDeriv x_n(t) & = F(x_n(t),V x_{n-1}(t) )  \\
\end{aligned}
\right \}
\label{Eq:ConstantRateTransitODE}
\end{equation}
where, for the constant transit rate between compartments $V$, we see $ f_i(x_{i-1}(t) ) = V x_{i-1}(t)$ and $ \mu(x_n(t))x_i(t) = V x_i(t),$ while $\beta(x_n(t))$ is the recruitment rate into the chain of transit compartments. The LCT consists of replacing the transit compartment chain $\{x_i(t)\}_{i=1}^{n-1}$ with the distributed delay term 
\begin{align}
x_{n-1} (t) = \int_0^{\infty} \beta(x_n(t-s)) g_{V}^{n-1}(s) \d s
\label{Eq:ConstantRateDelayTerm}
\end{align}
where $g_{V}^{n-1}(s)$ is the PDF of the gamma distribution with scale parameter $V$ and shape parameter $n-1$
\begin{equation*}
g_{V}^{n-1}(s) = \frac{V^{n-1}s^{n-2}e^{-Vs}}{(n-2)!}. 
\end{equation*}
The linchpin of the LCT is the ability to write $g_{V}^i(s)$ as the solution of a system of differential equations
\begin{equation*}
\frac{\d}{\d s} g_{V}^1(s) = -V g_{V}^1(s) \quad \textrm{and} \quad \frac{\d}{\d s} g_{V}^i(s) = V[g_{V}^{i-1}(s)-g_{V}^i(s)], 
\end{equation*}
which is an explicit example of a sufficient condition to replace a distributed DDE by a system of ODEs \citep{Vogel1965,Fargue1974}, namely that the delay kernel $K(t)$ must satisfy 
\begin{equation*}
\frac{\d^n}{\d t^n} K(t)+ \displaystyle \sum_{i=0}^{n-1} a_i(t) \frac{\d^i}{\d t^i} K(t) = 0.
\end{equation*}

Often, particularly in the pharmaceutical sciences, the transit rate and clearance terms are not constant, but rather determined through an external variable, $y(t)$, so $ f_i(x_{i-1}(t),y(t) ) =  V(y(t))x_{i-1}(t)$ and $\mu(x_n(t),y(t)) = V(y(t))x_i(t)$ \citep{Krzyzanski2011,Hu2018,Roskos2006,deSouza2017}. Naively including a variable transit rate, $V(y(t))$ in \eqref{Eq:ConstantRateTransitODE} gives
\begin{equation}
\left \{
\begin{aligned}
\TimeDeriv x_1(t) & =   \frac{ \beta x(t) }{V(y(t))} - V(y(t))x_1(t)  \\[0.2cm]
\TimeDeriv x_i(t) & =  V(y(t))\left[ x_{i-1}(t) - x_{i}(t)\right] \quad \textrm{for} \quad i = 2,3,...,n-1 \\ [0.2cm]
\TimeDeriv x_n(t) & = F(x_n(t),V(y(t))x_{n-1}(t) )- \gamma(x_n(t)) x_n(t). \\
\end{aligned}
\right \}
\label{Eq:GammaEquivalentODE}
\end{equation}

Cassidy et al. \citep{Cassidy2018a} established the equivalence between \eqref{Eq:GammaEquivalentODE} and a state dependent gamma distributed DDE by explicitly considering the age-structured PDE modelling the underlying maturation process. In the variable transit rate case, the distributed delay term \eqref{Eq:ConstantRateDelayTerm} becomes, for $j = 1,2,...,n-1$, 
\begin{equation*}
 x_{n-1}(t) =  \int_{0}^{\infty} g_{V}^j \left( \int_{t-\phi}^t \frac{V(y(s))}{V}  \d s\right) \frac{ \beta x(t-\phi) }{V(y(t-\phi))}  \d \phi.
\end{equation*}
While the results developed in this work translate to models that include external control, we do not focus on state dependent distributed DDEs.

The model ingredients necessary to derive equations such as \eqref{Eq:ConstantRateTransitODE} or \eqref{Eq:GammaEquivalentODE} were considered in \citep{Gurney1986,Diekmann2020,Diekmann2020a}. Broadly speaking, creating a model like \eqref{Eq:ConstantRateTransitODE} or \eqref{Eq:GammaEquivalentODE} requires determining the birth (or appearance) rate $\beta(x_n(t))$, the death (or growth rate) $\mu(x_n(t))$ and the ageing (or transit rate) $V(y(t))$. These model ingredients are precisely those catalogued by Diekmann and collaborators in their work on physiologically structured equations \citep{Diekmann2020,Diekmann2020a}. In brief, these model ingredients allow for the development of a physiologically structured model. In their recent work, Diekmann and coworkers derived necessary and sufficient criteria to determine if the, typically infinite dimensional structured models, can be reduced to a finite dimensional system of ODEs without the loss of relevant information \citep{Diekmann2017,Diekmann2020,Diekmann2020a}. 

The physiologically structured models considered by Diekmann and collaborators offer a framework to study the role of individual level heterogeneity on population level dynamics. These structured models allow for individuals to be continuously distributed in ``trait'' (i.e age, size, maturity,\ldots)  space, rather than imposing the artificial binning that would be necessary in the ODE case. In general, these structured population models describe the evolution of a density $p$ over the set of possible ``traits'', which provide the physiological structure, $\Omega$. Often, the population distribution across the possible states determines the model output and is a density over $\Omega$, so $p \in L_1(\Omega)$.  It is then natural to consider the population level dynamics, given by the time evolution of 
\begin{equation*}
N(t) = \int_{\Omega} \psi(x) n(t,x)\d x .
\end{equation*}
The function $\psi$ acts as a weight function in the mapping the distribution of individual states to the  population, equivalently the mapping $L_1(\Omega) \to \mathbb{R}^k$. Through careful bookkeeping, it is sometimes possible to cast the evolution of $N(t)$ as a delay, or renewal, equation  \citep{Diekmann1998,Diekmann2010}
\begin{equation*}
N(t) = F(N_t),
\end{equation*}
where $N_t = N(t+\theta), \  \theta \in (-\infty,0)$ and, for $\rho > 0$, solutions live in the natural phase space   \citep{Diekmann2012}
\begin{align*}
L_{1,\rho} = \left \{ f \ \bigg| \int_{-\infty}^0 |f(\phi)| e^{\rho \phi} \d \phi < \infty \right \}, 
\end{align*}

Here, we employ a similar book keeping strategy when considering the cyclic system~\eqref{Eq:GenericCyclicDifferentialEquation} to obtain a scalar distributed DDE. Effectively, by tracking the appearance or recruitment rate into each compartment and measuring the expansion or contraction of each cohort, we write down a component-wise solution of the transit stages in the cyclic differential equation~\eqref{Eq:GenericCyclicDifferentialEquation} in Section~\ref{Sec:GeneralizedLinearChainTechnique}. Then, similar to the LCT, we are left with a scalar distributed DDE. However, unlike the classical LCT and existing variants, our technique extends to models with both nonlinear clearance rates and the delayed terms from \eqref{Eq:GenericCharacteristicEquation}. We then show how recasting the system of $n$ DDEs as the equivalent scalar distributed DDE simplifies model analysis by establishing non-negativity of solutions, giving an explicit expression for equilibria and calculating the characteristic equation by making extensive use of the chain rule for Fr\'{e}chet derivatives to replace the $n \times n$ determinant typically involved the calculation of the characteristic function in Section~\ref{Sec:ScalarDDEProperties}. Next, we consider two biological systems and corresponding mathematical models which take the form \eqref{Eq:GenericCyclicDifferentialEquation} in Section~\ref{Sec:Examples}. In particular, these examples elucidate how the chain-like structure of \eqref{Eq:GenericCyclicDifferentialEquation} hide delayed processes that are crucial in the physiological system, and offer the opportunity to illustrate the general theory established in the preceding sections while demonstrating how the equivalence between a cyclic differential equation and a scalar distributed DDE can be implemented in practice. We finish with a discussion of the mathematical and biological advantages of our work in a brief conclusion.

\section{Generalized linear chain technique}\label{Sec:GeneralizedLinearChainTechnique}

In this section, we demonstrate how to reduce \eqref{Eq:GenericCyclicDifferentialEquation} to a scalar distributed DDE. As mentioned, the theory for scalar DDEs is quite well studied, so this reduction enables simpler analysis of the equivalent system. We note that the case with no explicit delays has been extensively studied and catalogued by Diekmann et al. \citep{Diekmann2017,Diekmann2020,Diekmann2020a}. For ease of notation, we separate our analysis into two cases: the first with only one explicit delay in \eqref{Eq:GenericCyclicDifferentialEquation} and the second with multiple explicit delays. In what follows, we use $x_{i,t}$ to denote the function segment $x_{i,t}(\theta) = x_i(t+\theta)$ for $ \theta \in (-\infty,0).$
 In the first case, to avoid cumbersome notation, we consider a specific case of \eqref{Eq:GenericCyclicDifferentialEquation} with  $n=3$
\begin{equation}
\left \{
\begin{aligned}
\TimeDeriv x_1(t)&  = f_1\left( \int_0^{\infty} x_3(t-\phi)K_{1}(\phi)\d \phi \right) - \left( \mu(x_3(t))  ) \right)   x_1(t) \\ 
\TimeDeriv x_2(t)&  = f_2 \left( x_1(t) \right) - \left( \mu(x_3(t)) \right)  x_2(t) \\
\TimeDeriv x_3(t)&  = f_3\left( x_2(t) \right) - \left( \mu(x_3(t))  \right)  x_3(t) \\
\end{aligned}
\right \}
\label{Eq:n3SingleDelayCyclicDifferentialEquation}
\end{equation}

We note that the differential equation for $x_1(t)$ in \eqref{Eq:n3SingleDelayCyclicDifferentialEquation} is linear in $x_1$ and, otherwise, is a possibly non-linear function of $x_3$. Specifically, the term $f_1\left( \int_0^{\infty} x_3(t-\phi)K_{1}(\phi)\d \phi \right)$, which is independent of $x_1(t)$, can be thought of as the recruitment rate at time $t$, while the factor $ \mu(x_3(t)) $  gives the growth or contraction rate of $x_1(t)$ at time $t$. Then, using Leibniz's rule, it is possible to verify that
\begin{align*}
x_1(t) & = \int_{0}^{\infty} f_1\left( \int_0^{\infty} x_3(t-s-\phi)K_{1}(\phi)\d \phi \right)\exp\left(- \int_{t-s}^t \mu(x_3(u)) \d u \right) \d s \\
 & = \int_{-\infty}^{t} f_1\left( \int_0^{\infty} x_3(\sigma-\phi)K_{1}(\phi)\d \phi \right)\exp\left(- \int_{\sigma}^t \mu(x_3(u)) \d u \right) \d \sigma. 
\end{align*}
We note that $x_1(t)$ is entirely determined by $x_3(t)$ and that the expression 
\begin{equation*}
f_1\left( \int_0^{\infty} x_3(t-s-\phi_1)K_{1}(\phi_1)\d \phi_1 \right)\exp\left(- \int_{t-s}^t \mu(x_3(u)) \d u \right),
\end{equation*}
is the product of the recruitment into $x_1$ at time $t-s$ and the expansion or contraction, determined by the sign of $\mu$, of that cohort between time $t-s$ and $t$. Using the same technique, we obtain
\begin{align*}
x_2(t) & = \int_0^{\infty} f_2 \left( x_1(t-r) \right) \exp\left( - \int_{t-r}^t \mu(x_3(u))  \d u \right) \d r  \\
&  = \int_{-\infty}^{t} f_2 \left( x_1(r) \right) \exp\left( - \int_{r}^t \mu(x_3(u)) \d u \right) \d r.
\end{align*}
Now, using the expression for $x_1(t)$, we see that 
\begin{align*}
x_2(t) & = \int_{-\infty}^{t} f_2 \left[ \int_{-\infty}^{r} f_1\left( \int_0^{\infty} x_3(\sigma-\phi_1 )K_{1}(\phi_1)\d \phi_1 \right) \exp\left(- \int_{\sigma}^r \mu(x_3(u))  \d u \right) \d \sigma  \right]  \\
& \quad {} \times \exp\left( - \int_{r}^t \mu(x_3(u))  \d u \right) \d r.
\end{align*} 
Once again, we note that $x_2(t)$ is entirely determined by $x_3(t)$ alone, so we finally obtain the scalar distributed DDE
\begin{align*} 
\TimeDeriv x_3(t) & = f_3 \left( x_2(t) \right) - \left( \mu(x_3(t))  \right)  x_3(t) \\   
& = f_3 \left(  \int_{-\infty}^{t} f_2 \left[ \int_{-\infty}^{r} f_1 \left( \int_0^{\infty} x_3(\sigma-\phi_1)K_{1}(\phi_1) \d \phi_1 \right) \exp\left(- \int_{\sigma}^r  \mu(x_3(u))  \d u \right) \d \sigma  \right] \right. \\ 
& \quad {} \left. \times \exp \left( - \int_{r}^{t} \mu(x_3(u)) \d u \right) \d r \right) - \left( \mu(x_3(t)) \right)  x_3(t). 
\end{align*}

We begin formalizing the relationship between the chain structure of \eqref{Eq:GenericCyclicDifferentialEquation} and a scalar distributed DDE by partially solving the differential equations for the transit compartments.

\begin{lemma}\label{Lemma:ChainReduction}
Assume that $[x_1(t), x_2(t),...,x_n(t)]$ solves \eqref{Eq:GenericCyclicDifferentialEquation}. Then $x_i(t) = F_i(x_{i-1,t},x_{n,t})$ for $i \geq 2$.
\end{lemma}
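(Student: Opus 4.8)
The plan is to treat each transit equation in \eqref{Eq:GenericCyclicDifferentialEquation} as a scalar linear inhomogeneous ordinary differential equation in the single unknown $x_i$ and to solve it by an integrating factor. Fix $i \geq 2$. The right-hand side of the $i$-th equation splits into a recruitment term
\begin{equation*}
g_i(t) = f_i\left( \int_0^{\infty} x_{i-1}(t-\phi) K_{i}(\phi) \d \phi \right),
\end{equation*}
which does not depend on $x_i(t)$ and is determined entirely by the history segment $x_{i-1,t}$, together with a linear decay term $-\mu(x_n(t))\, x_i(t)$ whose rate $\mu(x_n(t))$ is determined by $x_{n,t}$. Thus, once the two segments $x_{i-1,t}$ and $x_{n,t}$ are regarded as known forcing data, the equation for $x_i$ is genuinely linear and scalar.

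I would then introduce the integrating factor $\exp\left( \int_0^t \mu(x_n(u)) \d u \right)$, which recasts the $i$-th equation as $\TimeDeriv\left[ x_i(t)\exp\left( \int_0^t \mu(x_n(u)) \d u \right) \right] = g_i(t) \exp\left( \int_0^t \mu(x_n(u)) \d u \right)$. Integrating in time and appealing to variation of constants yields the candidate representation
\begin{equation*}
x_i(t) = \int_{-\infty}^{t} f_i\left( \int_0^{\infty} x_{i-1}(\sigma-\phi) K_{i}(\phi) \d \phi \right) \exp\left( - \int_{\sigma}^{t} \mu(x_n(u)) \d u \right) \d \sigma,
\end{equation*}
exactly paralleling the worked $n = 3$ case; the base point used to define the integrating factor cancels, so only the difference $\int_\sigma^t \mu$ survives.

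A change of variables $\sigma = t - s$ then shows that every argument appearing on the right---the recruitment argument $x_{i-1}(t-s-\phi)$ and the states $x_n(u)$ entering the exponential, with $u \in (\sigma,t) \subset (-\infty,t)$---lies at or before time $t$, so the expression is a functional of the two segments $x_{i-1,t}$ and $x_{n,t}$ alone. Declaring $F_i$ to be this functional gives $x_i(t) = F_i(x_{i-1,t}, x_{n,t})$, which is the claim. Equivalently, one may bypass the derivation and simply differentiate the displayed formula under the integral sign (Leibniz's rule, as the text does for $n=3$) to confirm directly that it solves the $i$-th equation.

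The main obstacle is justifying the lower limit $-\infty$, i.e. discarding the contribution of data in the infinite past. This is precisely where the fading-memory phase space $L_{1,\rho}$ enters: one needs the homogeneous factor $\exp\left( - \int_{\sigma}^{t} \mu(x_n(u)) \d u \right)$ to decay as $\sigma \to -\infty$ while the recruitment $g_i(\sigma)$ grows no faster than the weight $e^{-\rho\sigma}$ permits, so that the improper integral converges absolutely and the boundary term at $-\infty$ vanishes. I expect this to follow from a mild lower bound (coercivity) on $\mu$ together with membership of the solution in $L_{1,\rho}$, and I would flag this convergence---not the elementary algebra of the integrating factor---as the only delicate point, inherited from the well-posedness of \eqref{Eq:GenericCyclicDifferentialEquation} in the chosen phase space. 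Note finally that each $F_i$ is built independently for every $i \geq 2$, so no induction is required here; composing the $F_i$ to collapse the system to a single scalar DDE in $x_n$ is a separate step.
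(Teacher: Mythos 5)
Your proposal is correct and follows essentially the same route as the paper: the paper's proof likewise treats each transit equation as linear in $x_i$ with recruitment determined by $x_{i-1,t}$ and decay rate by $x_{n,t}$, and writes down the same variation-of-constants representation $x_i(t) = \int_0^\infty f_i\left( \int_0^\infty x_{i-1}(t-s-\phi)K_i(\phi)\,\d\phi \right) \exp\left( -\int_{t-s}^t \mu(x_n(u))\,\d u \right) \d s$, verified via Leibniz's rule as in the $n=3$ example, with no induction at this stage. If anything you are more careful than the paper, which silently assumes the improper integral converges and the boundary contribution from the infinite past vanishes---the compatibility of initial data that makes this legitimate is only addressed later, in Theorem~\ref{Theorem:n3ReductionToDistributedDDE} and the surrounding discussion of the phase space.
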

\begin{proof}
The proof follows the structure of the previous example where $n=3$, with the $i = 1$ case following verbatim with
\begin{align*}
x_1(t) & = \int_{0}^{\infty} f_1\left( \int_0^{\infty} x_n(t-s-\phi)K_{1}(\phi)\d \phi \right)\exp\left(- \int_{t-s}^t \mu(x_n(u))  \d u \right) \d s = F_1(x_{n,t}).
\end{align*}
Now, consider 
\begin{equation*}
\TimeDeriv x_{i+1}(t) = f_{i+1}\left( \int_0^{\infty} x_{i}(t-\phi)K_{i+1}(\phi)\d \phi \right) - \left( \mu(x_n(t)) \right)   x_{i+1}(t),
\end{equation*}
and note that the above differential equation is linear in $x_{i+1}$ and potentially non-linear in $x_i$ and $x_n$. Using the same strategy as in the $n=3$ example, we see that 
\begin{align*}
x_{i+1}(t) & = \int_0^{\infty} f_{i+1} \left[ \int_0^{\infty}  x_{i}(t-s-\phi)K_{i+1}(\phi)\d \phi \right] \exp\left( - \int_{t-s}^t \mu(x_n(u))  \d u \right) \d s = F_i(x_{i,t},x_{n,t}),
\end{align*} 
which completes the claim. 
\end{proof}

It then follows that, as in the LCT, we can close the cycle by writing $x_i(t) = F_i(x_{n,t} )$ for $i = 1,2,...,n-1 $.  Thus, the dynamics of \eqref{Eq:GenericCyclicDifferentialEquation} are determined by the dynamics of $x_n$.
\begin{theorem}\label{Theorem:DistributedDDEReduction}
Let $[x_1(t),x_2(t),...,x_n(t)$ satisfy \eqref{Eq:GenericCyclicDifferentialEquation}. Then, $x_n(t)$ satisfies a scalar distributed DDE.
\end{theorem}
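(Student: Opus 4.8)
The plan is to iterate Lemma~\ref{Lemma:ChainReduction} to eliminate every transit compartment in favor of $x_n$, reducing the full system to a single scalar equation driven only by the history of $x_n$. By Lemma~\ref{Lemma:ChainReduction}, each $x_i$ for $i \geq 2$ is a functional $F_i$ of the history segments $x_{i-1,t}$ and $x_{n,t}$, while the base case gives $x_1(t) = F_1(x_{n,t})$ depending on $x_{n,t}$ alone. First I would establish by induction on $i$ that each $x_i(t)$ can be written as a functional of $x_{n,t}$ only. The inductive hypothesis is that $x_{i-1}(t) = G_{i-1}(x_{n,t})$ for some functional $G_{i-1}$; substituting this into $x_i(t) = F_i(x_{i-1,t}, x_{n,t})$ and observing that the history segment $x_{i-1,t}(\theta) = x_{i-1}(t+\theta) = G_{i-1}(x_{n,t+\theta})$ is itself determined by $x_{n,t}$ (since the history of $x_n$ up to time $t$ determines $x_n$ up to each earlier time $t+\theta$), I obtain $x_i(t) = G_i(x_{n,t})$. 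The base case $i=1$ is exactly the $F_1$ expression recorded in the lemma's proof.

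Having expressed $x_{n-1}(t) = G_{n-1}(x_{n,t})$, I would then substitute into the governing equation for $x_n$ from \eqref{Eq:GenericCyclicDifferentialEquation},
\begin{equation*}
\TimeDeriv x_n(t) = f_n\left( \int_0^{\infty} x_{n-1}(t-\phi) K_n(\phi) \d \phi \right) - \mu(x_n(t)) x_n(t).
\end{equation*}
Because $x_{n-1}(t-\phi) = G_{n-1}(x_{n,t-\phi})$ depends only on the history of $x_n$, the entire right-hand side is a functional of $x_{n,t}$ together with the pointwise value $x_n(t)$, which is itself encoded in $x_{n,t}$. This yields a closed scalar equation $\TimeDeriv x_n(t) = \mathcal{F}(x_{n,t})$, which is precisely a scalar distributed DDE on the history space. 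The explicit nested-integral form displayed for the $n=3$ case shows concretely what $\mathcal{F}$ looks like after unwinding the recursion.

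The step I expect to require the most care is verifying that the substitution of a history-dependent functional into an earlier history segment remains well defined on the infinite-delay phase space, rather than the bookkeeping of the nesting itself. Writing $x_{i-1,t}(\theta) = G_{i-1}(x_{n,t+\theta})$ presupposes that $x_{n,t}$ determines $x_{n,s}$ for all $s \leq t$, and that the resulting functionals map the phase space $L_{1,\rho}$ into itself; one must confirm that the exponential survival factors $\exp(-\int_\sigma^t \mu(x_n(u)) \d u)$ and the convolutions against the PDF kernels $K_i$ preserve the relevant integrability and do not destroy membership in the weighted space. I would note that these are the same structural operations already validated in the $n=3$ computation and in Lemma~\ref{Lemma:ChainReduction}, so the induction inherits their well-posedness, and the theorem follows once the scalar functional $\mathcal{F}$ is exhibited.
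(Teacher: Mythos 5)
Your proposal is correct and takes essentially the same route as the paper: both iterate Lemma~\ref{Lemma:ChainReduction} to compose $x_2(t) = F_2(F_1(x_{n,t}),x_{n,t}) = G_2(x_{n,t})$ up through $x_{n-1}(t) = G_{n-1}(x_{n,t})$, then substitute into the governing equation for $x_n$ to obtain the closed scalar distributed DDE. Your explicit attention to the well-definedness of the nested history substitution $x_{i-1,t}(\theta) = G_{i-1}(x_{n,t+\theta})$ is a detail the paper leaves implicit, but otherwise the argument is the same.
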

\begin{proof}
Using Lemma~\ref{Lemma:ChainReduction}, we write
\begin{equation*}
x_1(t) = F_1(x_{n,t}), \quad \textrm{and} \quad x_i(t) = F_i(x_{i-1,t},x_{n,t}).
\end{equation*}
Then, as $x_1(t)= F_1(x_{n,t})$, it follows that $x_2(t) = F_2(F_1(x_{n,t}),x_{n,t}) = G_2(x_{n,t})$. Now, we can repeat this for $i = 3,...,n-1$, and obtain 
\begin{equation*}
x_{n-1}(t) = F_{n-1}( x_{n-2,t},x_{n,t} )  = F_{n-1}( F_{n-2} ( x_{n-3,t},x_{n,t} ), x_{n,t} )= ... =   G_{n-1}(x_{n,t}),
\end{equation*}
so that
\begin{align*}
\TimeDeriv x_n(t) &  = f_n\left( \int_0^{\infty} x_{n-1}(t-\phi)K_{n}(\phi)\d \phi \right) - \left( \mu(x_n(t)) \right)   x_n(t) \\
 &  = f_n\left( \int_0^{\infty} G_{n-1}(x_n(t-\phi)) K_{n}(\phi)\d \phi \right) - \left( \mu(x_n(t)) \right)   x_n(t).
\end{align*} 

\end{proof}

To complete the equivalence between the scalar distributed DDE and the system of cyclic differential equations~\eqref{Eq:GenericCyclicDifferentialEquation}, we must map the initial data from one formulation to the other. This can be slightly complicated, as the dimensions of phase space may be different in each formulation. For example, the classic LCT establishes the equivalence between an Erlang distributed DDE with initial data in the infinite dimensional probability space given by the Erlang PDF  with a system of ODEs with finite dimensional phase space.

\begin{theorem}\label{Theorem:n3ReductionToDistributedDDE}
The cyclic differential equation \eqref{Eq:GenericCyclicDifferentialEquation}  and initial data given by
\begin{equation*}
x_i(s) = \xi_i(s) \quad \textrm{for} \quad s \in (-\infty,t_0],
\end{equation*}
is equivalent to a scalar distributed delay differential equation for $x_n(t)$ given by 
\begin{equation}
\TimeDeriv x_n(t) =  f_n\left( \int_0^{\infty} G_{n-1}(x_n(t-\phi)) K_{n}(\phi)\d \phi \right) - \left( \mu(x_n(t)) \right)   x_n(t).
\label{Eq:ReducedDistributedDDE}
\end{equation}
for $G_{n-1}(x_n(t))$ given by Theorem~\ref{Theorem:DistributedDDEReduction} and a suitably chosen initial function $\psi(s)$.
\end{theorem}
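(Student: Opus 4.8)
The plan is to prove the claimed equivalence by verifying both directions of implication and, crucially, by making explicit the ``suitably chosen'' initial function together with the compatibility conditions it must encode. The forward direction is almost immediate from the preceding results: if $[x_1,\dots,x_n]$ solves \eqref{Eq:GenericCyclicDifferentialEquation} with history $\xi_i$, then Lemma~\ref{Lemma:ChainReduction} and Theorem~\ref{Theorem:DistributedDDEReduction} already show that $x_n$ satisfies \eqref{Eq:ReducedDistributedDDE}, and I would simply set $\psi := \xi_n$, the restriction of $x_n$ to $(-\infty,t_0]$. The substantive content lies in the reverse direction and in pinning down exactly which histories $\xi_i$ are admissible.

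First I would record the \emph{compatibility conditions}. The integral representations in Lemma~\ref{Lemma:ChainReduction} integrate the recruitment back to $-\infty$, so they are the integrating-factor solutions of the (linear-in-$x_i$) transit equations \emph{only} when the contribution of the data at $t_0$ coincides with the tail $\int_{-\infty}^{t_0}$ of the representation. Comparing the variation-of-parameters solution on $[t_0,t]$ with the formula $x_i(t)=G_i(x_{n,t})$ yields the constraint
\begin{equation*}
\xi_i(s) = G_i(\xi_{n,s}) \quad \text{for all } s \leq t_0, \ i = 1,\dots,n-1,
\end{equation*}
with $G_1 = F_1$. Thus the admissible histories for \eqref{Eq:GenericCyclicDifferentialEquation} form a constraint set parametrised by the single free function $\xi_n$, which is precisely what reconciles the differing phase-space dimensions: the $n$-tuple of histories is pinned to one free history, matching the scalar DDE whose natural phase space (e.g.\ $L_{1,\rho}$) carries a single function.

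For the reverse direction I would start from a solution $x_n$ of \eqref{Eq:ReducedDistributedDDE} with history $\psi$, \emph{define} $x_i(t) := G_i(x_{n,t})$ for $i=1,\dots,n-1$ using $\psi$ to supply the history for $t\leq t_0$, and then verify that $[x_1,\dots,x_n]$ solves \eqref{Eq:GenericCyclicDifferentialEquation}. The verification is the computation of Lemma~\ref{Lemma:ChainReduction} run in reverse: differentiating the integrating-factor form $x_i(t)=\int_{-\infty}^t f_i\big(\int_0^\infty x_{i-1}(\sigma-\phi)K_i(\phi)\d\phi\big)\exp(-\int_\sigma^t\mu(x_n(u))\d u)\,\d\sigma$ by the Leibniz rule returns exactly the transit equation for $x_i$, while the equation for $x_n$ is \eqref{Eq:ReducedDistributedDDE} itself once $x_{n-1}=G_{n-1}(x_{n,\cdot})$ is substituted. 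Setting $\xi_i := G_i(\psi)$ then recovers admissible, compatible initial data, closing the loop.

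The main obstacle I anticipate is not the formal differentiation but the well-posedness bookkeeping around the infinite delay: one must fix a phase space (such as $L_{1,\rho}$) in which the nested integrals defining the $G_i$ converge and depend continuously on the history, confirm that the constructed $x_i$ genuinely lie in that space, and invoke an existence--uniqueness result so that the two constructions are mutually inverse rather than merely producing \emph{some} solution on each side. A secondary point to handle carefully is that the compatibility constraint is preserved forward in time, which follows from uniqueness but should be stated explicitly, so that the correspondence $\psi \leftrightarrow (\xi_1,\dots,\xi_n)$ is a genuine bijection between solution families rather than a one-way reduction.
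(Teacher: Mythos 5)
Your proposal is correct and takes essentially the same route as the paper: equivalence is established by the correspondence $\xi_i = G_i(\psi)$ in one direction and $\psi = \xi_n$ with the compatibility constraints $\xi_i = F_i(\xi_n)$ in the other, so that the $n$-tuple of histories is parametrised by the single free function $\xi_n$. The one refinement worth noting is that the paper requires these constraints to hold only $K_i$-almost everywhere (i.e.\ as equalities in $L_1(K_i)$) rather than pointwise for all $s \leq t_0$ as you state; your explicit Leibniz-rule verification of the reverse direction and the well-posedness remarks are elaborations of, not departures from, the paper's argument.
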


\begin{proof}

From Theorem~\ref{Theorem:DistributedDDEReduction}, we can write $x_i(t)= F_i(x_n(t))$ for $i = 2,3,\ldots, n-1.$ Thus, the dynamics of \eqref{Eq:GenericCyclicDifferentialEquation} are completely determined by
\begin{equation*}
\TimeDeriv x_n(t) =  f_n\left( \int_0^{\infty} G_{n-1}(x_n(t-\phi)) K_{n}(\phi)\d \phi \right) - \left( \mu(x_n(t))  \right)   x_n(t).
\end{equation*}
To show equivalence between the two fromulations, \eqref{Eq:GenericCyclicDifferentialEquation} and \eqref{Eq:ReducedDistributedDDE}, we must show that, for given $ \{\xi_i(s)\}_{i=1}^n$, it is possible to construct a suitable $\psi(s)$ of the scalar distributed DDE and vice versa.

Assume that $\psi(s)$ is given, so setting 
\begin{equation}
\xi_i(s) = G_i(\psi(s)) \quad \forall s \in (-\infty,t_0]
\label{Eq:InitialDataCondition}
\end{equation} 
gives appropriate initial conditions for \eqref{Eq:GenericCyclicDifferentialEquation}

Now, assume that $ \{\xi_i(s)\}_{i=1}^n$ are given, and we must construct a history function $\psi(s)$ for the scalar distributed DDE. The function $\psi$ must be such that \eqref{Eq:InitialDataCondition} holds. Necessarily, we must have $\psi(s) = \xi_n(s)$  $K_{n}$-almost everywhere in $(-\infty,t_0)$. This imposes constraints on the remaining $\xi_i(s)$, as $\xi_i$ must simultaneously satisfy  
\begin{equation*}
\xi_i(s) = F_i(\xi_n(s)) \quad K_{i} \textrm{-almost everywhere}.
\end{equation*} 
We note that the equality only must hold $K_{i}$ almost-everywhere, which is equivalent to the history functions being equal in the equivalence class $L_1(K_{i})$. In the case that $k_{i} = \delta(t)$, i.e. the no delay case, then Cassidy and Humphries \citep{Cassidy2018} demonstrate how to construct a suitable history function.
\end{proof}

In general, a system of DDEs like \eqref{Eq:GenericCyclicDifferentialEquation} takes initial data in the infinite dimensional phase space \citep{Diekmann2012}
\begin{align*}
C_{0,\rho,} = \left \{ f \in C_0 \ \bigg| \lim_{\phi \to -\infty} f(\phi)e^{\rho \phi} = 0 \right \}   
\end{align*}
As the phase space of the cyclic differential formulation and the scalar distributed DDE are both infinite dimensional, the strict condition on the history functions $\xi_i$ in the preceding equivalence is perhaps unsurprising. Conversely,  the phase space of a compartmental ODE model is $\mathbb{R}^n$, so there is more ``space'' to exploit when constructing an appropriate history function for the LCT.

\section{Properties of the scalar distributed DDE}\label{Sec:ScalarDDEProperties}

Equation~\eqref{Eq:GenericCyclicDifferentialEquation} has been extensively studied in both the discrete delay case, where $K_{i}(s) = \delta(s-\tau_{i})$ and the no delay case where $K_{i}(s) = \delta(s)$ \citep{Ivanov2018,Braverman2019,MalletParet1996}.  As we are primarily interested in biological systems demonstrating a cyclic nature, we begin by demonstrating that, for modest assumptions on the functions $f_i$, solutions of  \eqref{Eq:GenericCyclicDifferentialEquation} evolving from non-negative initial data remain non-negative.
\begin{proposition}
Assume that $\mu $ is bounded above so $\mu(x_n) \leq \mu_{max}$ and that the initial data $\xi_n$ satisfies
\begin{equation*}
\int_{-\infty}^{0} \xi_n(0-\phi)K_{i}(\phi) \d \phi > 0  \quad \textrm{for} \quad i = 1,2,...,n.
\end{equation*} 
Further, assume that each $f_i$ satisfies
\begin{equation*}
f_{i}(x) > 0 \quad \textrm{if} \quad  x > 0 \quad \textrm{and} \quad f_i(0) = 0 \quad i = 1,2,...,n.
\end{equation*}
Then, the solution of the IVP \eqref{Eq:ReducedDistributedDDE} satisfies $x_n(t) \geq 0$ for all $t > 0$.
\end{proposition}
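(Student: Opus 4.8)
The plan is to use the variation-of-constants form of \eqref{Eq:ReducedDistributedDDE}, exactly as in the representation of the transit compartments in Lemma~\ref{Lemma:ChainReduction}, and then to reduce everything to a single sign condition on the recruitment term. Taking $t_0 = 0$ and abbreviating the argument of $f_n$ by $R(s) = \int_0^\infty G_{n-1}(x_n(s-\phi))K_n(\phi)\,\d\phi$, I would write
\[
x_n(t) = \xi_n(0)\exp\left(-\int_0^t \mu(x_n(u))\,\d u\right) + \int_0^t f_n(R(s))\exp\left(-\int_s^t \mu(x_n(u))\,\d u\right)\d s.
\]
The hypothesis $\mu(x_n)\leq \mu_{max}$ gives $\exp(-\int_s^t \mu(x_n(u))\,\d u) \geq e^{-\mu_{max}(t-s)} > 0$, so each clearance factor is strictly positive and finite; consequently the representation is non-negative as soon as $\xi_n(0)\geq 0$ and $f_n(R(s))\geq 0$ on $[0,t]$. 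Since $f_n(x)\geq 0$ for $x\geq 0$ (a consequence of $f_n(0)=0$ and $f_n(x)>0$ for $x>0$), the whole problem reduces to showing that the recruitment $R(s)$ stays non-negative.

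To control $R(s)$, I would first show that every functional $F_i$ of Lemma~\ref{Lemma:ChainReduction}, and hence every composite $G_i$ of Theorem~\ref{Theorem:DistributedDDEReduction}, preserves non-negativity: a pointwise non-negative history is mapped to a non-negative value. This is immediate from the explicit integral
\[
F_i(x_{i,t},x_{n,t}) = \int_0^\infty f_{i+1}\!\left[\int_0^\infty x_i(t-s-\phi)K_{i+1}(\phi)\,\d\phi\right]\exp\left(-\int_{t-s}^t \mu(x_n(u))\,\d u\right)\d s,
\]
because a non-negative $x_i$ makes the inner weighted integral non-negative, the sign condition on $f_{i+1}$ makes the integrand non-negative, and the clearance exponential is positive. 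Starting from $x_1 = F_1(x_{n,t})$ and proceeding inductively up to $x_{n-1} = G_{n-1}(x_{n,t})$ then shows that $G_{n-1}$ carries non-negative histories of $x_n$ to non-negative values, so that $R(s)\geq 0$ whenever $x_n$ is non-negative on the relevant history.

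The difficulty is that $R(s)$ depends on $x_n$ at times in $[0,s]$ as well as on the initial history, so the implication ``$x_n\geq 0$ on the history $\Rightarrow x_n(t)\geq 0$'' is circular and must be closed by a continuation argument. I would set $t^\ast = \sup\{T>0 : x_n\geq 0 \text{ on } [0,T]\}$ and assume, for contradiction, that $t^\ast<\infty$, so that $x_n(t^\ast)=0$ while $x_n\geq 0$ on $[0,t^\ast]$ by continuity. On this interval the computed values feeding the nested integrals are non-negative, and the initial-data hypothesis $\int_{-\infty}^0 \xi_n(-\phi)K_i(\phi)\,\d\phi>0$ for every $i$ is precisely what guarantees that the contributions of the prescribed history $\xi_n$ to each recruitment term are strictly positive rather than merely non-negative. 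Combining these, $f_n(R(s))\geq 0$ on $[0,t^\ast]$ with strict positivity on a set of positive measure, and the variation-of-constants formula forces $x_n(t^\ast)>0$, contradicting $x_n(t^\ast)=0$; hence $t^\ast=\infty$.

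The step I expect to be the main obstacle is verifying that the weighted positivity of $\xi_n$ against each kernel $K_i$ genuinely propagates through the nested composition defining $G_{n-1}$ to keep $R(s)$ positive throughout the initial transient, when part of each integral still reaches into $\xi_n$ on $(-\infty,0]$. Once that propagation and the non-negativity of the histories on $[0,t^\ast]$ are secured, the remaining sign bookkeeping through the $F_i$ is routine.
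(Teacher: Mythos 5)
Your overall architecture coincides with the paper's: both proofs rest on (i) the inductive observation that the nested functionals $F_i$, and hence the composites $G_i$, map non-negative histories to non-negative values because of the sign hypothesis on the $f_i$ and the positivity of the clearance exponentials, and (ii) the bound $\mu(x_n)\leq \mu_{max}$, which via Gronwall (your variation-of-constants formula is the same estimate) yields $x_n(t)\geq \xi_n(0)e^{-\mu_{max}t}$ for as long as the recruitment stays non-negative. Up to that point your proposal is a faithful reconstruction.

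The genuine gap is in your final continuation step, and it is exactly the obstacle you flagged. You define $t^\ast$ as the supremum of times up to which $x_n\geq 0$, obtain $x_n(t^\ast)=0$, and then claim the hypothesis $\int_{-\infty}^0 \xi_n(-\phi)K_i(\phi)\,\d\phi>0$ forces $f_n(R(s))>0$ on a set of positive measure in $[0,t^\ast]$, hence $x_n(t^\ast)>0$, a contradiction. This claim is false for small $t^\ast$ when the kernels have delayed support. Take $K_i(\phi)=\delta(\phi-\tau)$ for all $i$ and $\xi_n$ a narrow bump centred at $-\tau$ with $\xi_n(0)=0$: the hypothesis holds since $\xi_n(-\tau)>0$, yet the positivity must traverse each stage of the composition $G_{n-1}$, picking up roughly one delay $\tau$ per stage, so $R(s)$ vanishes identically on an initial interval of length of order $(n-1)\tau$ and the solution genuinely sits at $x_n\equiv 0$ there. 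More structurally, a proof by contradicting $x_n(t^\ast)=0$ cannot work, because $x_n$ touching zero is perfectly consistent with the conclusion $x_n\geq 0$; what must be excluded is a sign change, not a zero. The paper closes this by a case split on $\xi_n(0)$ that you do not have. If $\xi_n(0)>0$, Gronwall between time $0$ and the first putative zero $t^\ast$ gives $0=x_n(t^\ast)\geq \xi_n(0)e^{-\mu_{max}t^\ast}>0$ directly, with no positivity of $R$ needed. If $\xi_n(0)=0$, either $G_{n-1}=0$ holds $K_n$-almost everywhere for all time, in which case $x_n\equiv 0$ solves \eqref{Eq:ReducedDistributedDDE} and the conclusion is trivial, or there is a first time $\hat t$ at which $\int_0^\infty G_{n-1}(x_{n,\hat t-\phi})K_n(\phi)\,\d\phi>0$; at that moment $\TimeDeriv x_n(\hat t)=f_n(\cdot)>0$, the solution becomes strictly positive, and one restarts the first case from $\hat t$. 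Replacing your strict-positivity propagation argument (which, as your example above shows, cannot be established uniformly down to $t=0$) with this dichotomy repairs the proof; the rest of your sign bookkeeping through the $F_i$ is exactly the paper's.
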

\begin{proof}
To begin, we note that if $G_{n-1}(x_{n,t}) \geq 0 $ $K_{n}$- almost everywhere, then 
\begin{equation*}
\TimeDeriv x_n(t) \geq - \mu(x_n(t))x_n(t) \geq -\mu_{max}x_n(t)
\end{equation*}
and Gronwall's inequality gives
\begin{equation*}
x_n(t) \geq x_n(0) \exp\left[ - \mu_{max}t \right] \geq 0. 
\end{equation*}
Therefore, to establish the claim, it is sufficient to show $G_{n-1}(x_{n,t}) \geq 0.$  From  
\begin{align*}
G_i(x_{n,t}) & = F_i(G_{i-1}(x_{n,t}),x_{n,t}) \\
& = \int_0^{\infty} f_{i} \left[ \int_0^{\infty}  G_{i-1}(x_{n,t-s-\phi}) K_{i}(\phi)\d \phi \right] \exp\left( - \int_{t-s}^t \mu(x_n(u))  \d u \right) \d s,
\end{align*}
and the assumption on $f_i$, if $G_{i-1} \geq 0 $-- $K_{i}$ almost-everywhere, then $ G_i(x_{n,t}) \geq 0 .$ 

Now, consider  
\begin{align*} 
G_1(x_{n,t}) = F_1(x_{n,t}) & = \int_{0}^{\infty} f_1\left( \int_0^{\infty} x_n(0-s-\phi)K_{1}(\phi)\d \phi \right)\exp\left(- \int_{-s}^0 \mu(x_n(u))  \d u \right) \d s, 
\end{align*}
and note that if $x_{n,t} \geq 0 $, then $G_1(x_{n,t}) \geq 0$. We consider two distinct cases.

\textbf{Case I.} Assume that $\xi_n(0)> 0,$ and let $t^*$ be the first time such that $x_n(t^*) = 0$. Then, for $s \in [0,t^*]$, $x_n(s) \geq 0$ and we obtain $G_i(x_{n,s}) \geq 0$. Then, for $t \in [0,t^*]$, we have
\begin{equation}
\TimeDeriv x_n(t) \geq -\mu_{max} x_n(t), 
\end{equation}
and Gronwall's inequality gives 
\begin{align*}
0 = x(t^*) \geq \xi_n(0)\exp(-\mu_{max}t^*) > 0,
\end{align*} 
which is a contradiction so no $t^*$ can exist. 

\textbf{Case II.} Assume that $\xi_n(0) =0$. Now, if $G_{n-1} = 0$-$K_n$ almost-everywhere for all $t > 0$, then $x_n = 0$ is the solution of the differential equation. Alternatively, let $\hat{t}$ be the first time such that 
\begin{align*}
\int_0^{\infty} G_{n-1}(x_{n,\hat{t}-\phi})K_{n}(\phi) d \phi > 0,
\end{align*}
so 
\begin{align*}
\left. \TimeDeriv x_n(t) \right|_{t = \hat{t}} =  f_n\left( \int_0^{\infty} G_{n-1}(x_{n,\hat{t}-\phi})K_{n}(\phi) d \phi  \right) > 0,
\end{align*}
so $x_n$ becomes positive at time $\hat{t}$ and we return to \textbf{Case I.}

\end{proof}

After establishing a mathematical model, a first step is often the study of equilibria. In \eqref{Eq:GenericCyclicDifferentialEquation}, an equilibrium solution is a vector of constant functions $[x_1^*,x_2^*,...,x_n^*]$ such that 
\begin{equation*}
\left[ \TimeDeriv x_1(t),\TimeDeriv x_2(t),...,\TimeDeriv x_n(t) \right] = [0,0,...,0].
\end{equation*}
Consequently, calculating the equilibrium solution involves simultaneously finding the zeros of $n$ nonlinear multivariate functions, which is slightly simplified by the form of \eqref{Eq:GenericCyclicDifferentialEquation} despite the nonlinearities. Conversely, equilibria $x^*$ of \eqref{Eq:ReducedDistributedDDE} satisfy the single variable equation
\begin{equation}
0 = f_n\left(G_{n-1}(x^*) \right)  - \mu(x^*) x^*.
\label{Eq:EquilibriumReducedCondition}
\end{equation}
 In the case of \eqref{Eq:EquilibriumReducedCondition}, we can use techniques from single variable calculus to establish existence and uniqueness of an equilibrium solution.  Defining $\mu^* = \mu(x^*)$ and returning to the definition of $G_i(x_n))$, we calculate
\begin{align*}
G_1(x^*) & =  \int_{0}^{\infty} f_1\left( \int_0^{\infty} x^* K_{1}(\phi)\d \phi \right)\exp\left(-  \mu^* s \right) \d s = \frac{f_1(x^*)}{\mu^*},  
\end{align*}
and 
\begin{align*}
G_i(x^*) & = \int_0^{\infty} f_{i} \left[ \int_0^{\infty}  G_{i-1}(x^*) K_{i}(\phi)\d \phi \right] \exp\left( - \mu^* s \right) \d s  \frac{ f_{i}(G_{i-1}(x^*)) }{\mu^*} = \frac{f_i}{\mu^*} \circ \frac{f_{i-1}}{\mu^*} \circ \ldots \circ \frac{f_1(x^*)}{\mu^*}. 
\end{align*}
We note that $ f_i(x_{i-1}^*)) /\mu^*$ is precisely the term that would be obtained by solving \eqref{Eq:GenericCyclicDifferentialEquation} for the $n$ different components of an equilibrium solution.

\subsection{Characteristic function of the scalar distributed DDE}

Once an equilibrium solution has been found, often the next step is to study the local stability of the equilibrium. As shown by Diekmann and Gyllenberg \citep{Diekmann2012}, the local stability of an equilibrium $x^*$ is determined via the position of zeros of the characteristic function. For systems of $n$ DDEs given by
\begin{align*}
\TimeDeriv y(t) = F(y,y_t),
\end{align*}
the characteristic function is determined by solving a transcendental eigenvalue problem arising from the $n \times n$ determinant
\begin{align*}
\det \left[ \lambda I - A - \mathcal{L}[B](\lambda) \right], 
\end{align*}
where $A$ and $B$ are the Fr\'{e}chet derivatives of $F$ with respect to $y$ and $y_t$ evaluated at the equilibrium point $y^*$. We now demonstrate how the reduced scalar distributed DDE can simplify the calculation of the characteristic equation. Assume that $x^*$ solves \eqref{Eq:EquilibriumReducedCondition}, so 
\begin{align*}
f_n\left(G_{n-1}(x^*) \right) = \mu^* x^*,
\end{align*}
and define $z(t) = x(t)- x^*$ with
\begin{align} \notag
\TimeDeriv z(t) &  = f_n\left(\int_0^{\infty} G_{n-1}(x(t-\phi))K_n(\phi) \d \phi \right) - \mu(x(t)) x(t) \\
& = f_n\left(\int_0^{\infty} G_{n-1}(x^*+z(t-\phi))\d \phi \right) - \mu(x^*+z(t)) (x^*+z(t)). 
\label{Eq:ZDifferentialEquation}
\end{align}

To complete the linearisation, we first consider non-delayed arguments of the right hand side of \eqref{Eq:ZDifferentialEquation} with linear approximation
\begin{equation}
 \mu(x^*+z(t)) (x^*+z(t)) = \mu^*x^* + \mu^*z(t) + \mu'(x^*) z(t) x^* + \mathcal{O}(z^2).
\end{equation}
We now turn to the delayed argument in \eqref{Eq:ZDifferentialEquation}, and must compute the Fr\'{e}chet derivative of the operator $H$ that maps $ \psi \in C_{0,\rho} $
\begin{align*}
H: \psi \to f_n\left(\int_0^{\infty} G_{n-1}( \psi(t-\phi))k_n(\phi) \d \phi \right).
\end{align*}
The chain rule for Fr\'{e}chet derivatives evaluated at the equilibrium $x^*$ gives
\begin{align*}
DH = f_n'(G_{n-1}(x^*) ) DG_{n-1} \psi = f_n'(x^*) \circ DG_{n-1} \circ DG_{n-2} \circ \ldots \circ DG_{1} \psi
\end{align*}
Now, we compute
\begin{align*}
DG_1 \psi  & = \int_{0}^{\infty}  f_1'(x^*) \left[ \int_0^{\infty} \psi(t-s-\phi)K_{1}(\phi)\d \phi \right]   e^{- \mu^*s } \d s \\
& \quad {} + \int_{0}^{\infty} e^{- \mu^*s }  f_1 (x^*) \left[ \int_{t-s}^t \mu'(x^*) \psi(x) \d x \right]  \d s \\ 
\end{align*}
and after setting $ \psi(t)  = e^{\lambda t}$, we get
\begin{align*}
DG_1 \psi  & =  \mathcal{L}[K_{1}](\lambda) \mathcal{L}[f_1'(x^*)](\mu^*+\lambda) e^{\lambda t} + \mu'(x^*) f_1 (x^*) \left[ \int_0^{\infty} e^{- \mu^*s } \left( \frac{e^{\lambda t} - e^{\lambda(t-s)} }{\lambda} \right) \d s \right] \\  
& {} = \mathcal{L}[K_{1}](\lambda) \mathcal{L}[f_1'(x^*)](\mu^*+\lambda) e^{\lambda t} + \mu'(x^*) f_1  (x^*) \left( \frac{1}{\mu^*} - \frac{1}{\mu^* + \lambda } \right) \frac{ e^{\lambda t}}{\lambda}  \\
& {} = \left( \mathcal{L}[K_{1}](\lambda) \mathcal{L}[f_1'(x^*)](\mu^*+\lambda) + \mathcal{L}\left[ \frac{ \mu'(x^*) f_1 (x^*)}{\mu^* } \right] \left( \mu^* + \lambda \right)  \right) \psi  
\end{align*}

As the above calculation holds for $i = 2,3,...n-1$, it follows from induction that 
\begin{align*}
DH \psi  & =  f_n'(G_{n-1}(x^*) ) \prod_{i=1}^{n-1} \left( \mathcal{L}[K_{i}](\lambda) \mathcal{L}[f_i'(x^*)](\mu^*+\lambda) + \mathcal{L}\left[ \frac{ \mu'(x^*) f_i (x^*)}{\mu^* } \right] \left( \mu^* + \lambda \right)  \right) \psi, 
\end{align*}
where $\psi(t) = Ce^{\lambda t}$. Then, $z(t) = x(t)- x^*$ satisfies the linear differential equation
\begin{align*}
\TimeDeriv z(t)& = DHz - \left[  \mu^*z(t) + \mu'(x^*) z(t) x^*  \right]
\end{align*}
which, using the ansatz $z = Ce^{\lambda t}$ and the resulting expression for $DH$, becomes
\begin{align*}
\lambda z(t) & = f_n'(G_{n-1}(x^*) ) \prod_{i=1}^{n-1} \left( \mathcal{L}[K_{i}](\lambda) \mathcal{L}[f_i'(x^*)](\mu^*+\lambda) + \mathcal{L}\left[ \frac{ \mu'(x^*) f_i (x^*)}{\mu^* } \right] \left( \mu^* + \lambda \right)  \right)z(t) \\
&{} \quad  - \left[ \mu^*+ \mu'(x^*) x^*  \right]z(t) .
\end{align*} 

Cancelling the $z(t)$ terms gives the characteristic equation
\begin{align} \notag
\lambda & = f_n'(G_{n-1}(x^*) ) \prod_{i=1}^{n-1} \left( \mathcal{L}[K_{i}](\lambda) \mathcal{L}[f_i'(x^*)](\mu^*+\lambda) + \mathcal{L}\left[ \frac{ \mu'(x^*) f_i (x^*)}{\mu^* } \right] \left( \mu^* + \lambda \right)  \right) \\
&{} \quad  - \left[ \mu^*+ \mu'(x^*) x^*  \right] . 
\label{Eq:GenericCharacteristicEquation}
 \end{align}
 
While these computations are cumbersome due to the notation involved, if we were to add an additional stage to \eqref{Eq:GenericCyclicDifferentialEquation}, updating the characteristic equation \eqref{Eq:GenericCharacteristicEquation} would be straightforward in this formulation. In particular, we would avoid calculating an $(n+1) \times (n+1)$ determinant, and simply have one extra factor in the multiplication. In Section~\ref{Sec:Examples}, we illustrate the simplicity of calculating the characteristic equation of the scalar distributed DDE for equations arising in biological modelling. 

In general, expanding the product of Laplace transforms yields $n$ different convolutions. In many biological examples, the growth or clearance rate is not state dependent, so $\mu'(x^*) = 0$ and the product of Laplace transforms becomes 
\begin{multline*}
\prod_{i=1}^{n-1} \mathcal{L}[K_{i}](\lambda) \mathcal{L}[f_i'(x^*)](\mu^*+\lambda)   \\
= \mathcal{L}[K_1*K_2* \ldots *K_{n-1}](\lambda) \mathcal{L}[f'_1(x^*) * f'_2(x^*)*\dots * f'_{n-1}(x^*)](\lambda+\mu^* ). 
\end{multline*}

Interestingly, the convolution of the PDFs $K_i$ represent the concatenation of the delayed process wherein changes in $x_1$ propagate to $x_n$ in the cyclic differential equation formulation given by \eqref{Eq:GenericCyclicDifferentialEquation}. As the densities $K_i(\phi)$ are only defined for $\phi >0$, the convolution of Laplace transforms is the moment generating function for the random variable modelling the time delay between the first and the $n$-th compartment. As the sojourn times in each stage are independent, this random variable is the sum of the random variables defining the sojourn time in each stage. Consequently, the mean delay between the first and $n$-th compartment is precisely the sum of the mean sojourn times in each compartment, as would be expected. Moreover, this form of the characteristic equation emphasizes the  concatenation of delayed processes modelled by the system of cyclic differential equations \eqref{Eq:GenericCyclicDifferentialEquation}. For completeness, we note that this term is present in the more general case where $\mu'(x^*) \neq 0$.

\section{Examples}\label{Sec:Examples}
The form of \eqref{Eq:GenericCyclicDifferentialEquation} is quite general and encompasses a large number of mathematical models of physiological processes, including those discussed earlier. Here, we consider models of two distinct biological processes to illustrate the general technique derived in Section \ref{Sec:GeneralizedLinearChainTechnique}. We begin with a model of the dynamics of the lac-operon, in which sequential expression of intermediate proteins controls the ability to use lactose an energy source. We consider Goodwin's ODE model of lac-operon dynamics, as well as a discrete DDE form of the same model, and reduce these models to a scalar distributed DDE. We note that the calculations shown here are easily generalisable to cyclic systems with $n \geq 4$.

We next consider a recent article studying white blood cell production \citep{Knauer2020}. The hematopoietic, or blood production, system has been modelled extensively, and these models often include explicit or implicit delays. As mentioned by Knauer et al. \citep{Knauer2020}, a compartmental system with linear feedback regulation implicitly includes a distributed delay, and the coupling of this delay with feedback is enough to produce oscillations. These oscillations are of particular interest in hematopoiesis due to the presence of so called ``dynamical diseases'' \citep{Mackey2020}. Here, we show that the Knauer et al. \citep{Knauer2020} model with maturation compartments and non-linear feedback also encodes a gamma type delay. 

\subsection{Models of lac-operon dynamics}
The lac-operon facilitates the use of lactose as a fuel source in certain types of bacteria and was one of the first genetic regulatory mechanisms to be understood. This regulatory mechanism is controlled by the presence of allolactose. In the presence of allolactose, mRNA transcription occurs and leads to the production of $\beta$-galactosidase, which converts allolactose to glucose. This conversion of allolactose eventually inhibits the production of mRNA and results in bistability in the operon. The lac-operon was one of the first genetic regulatory mechanisms to display such bistability.

Yildirim et al. \citep{Yildirim2004} proposed a reduced model of lac-operon dynamics to study the importance $\beta$-galactosidase on the bistability of the operon.  The structure of the reduced model proposed by Yildirim et al. \citep{Yildirim2004}  is similar to Goodwin's model of repressible dynamics \citep{Goodwin1965}. Before considering the Yildirim's DDE model of lac-operon dynamics, we study the simpler Goodwin \citep{Goodwin1965} model. Goodwin's model includes an metabolite controlled enzyme and intermediate stage and is known to produce oscillatory dynamics \citep{Goodwin1965}. 

Goodwin's model is a system of three differential equations modelling mRNA, $M(t)$; intermediate protein, $I(t)$; and effectors, $E(t)$ \citep{Goodwin1965}. The Goodwin model is a simple example of cyclic dynamics, where the production of one population is self regulating through the dynamics of the other two. By showing that the Goodwin model can be reduced to a scalar distributed DDE, we make this self-regulation explicit. The ODE model is
\begin{equation}
\left.
\begin{aligned}
\TimeDeriv M(t) & = F[E(t)] - \gamma_M M(t) \\
\TimeDeriv I(t) & = \alpha_I M(t)-\gamma_I I(t)\\
\TimeDeriv E(t) & = \alpha_E I(t) - \gamma_E E(t). 
\end{aligned}
\right \}
\label{Eq:LacOperonODE}
\end{equation}
The parameters $ \alpha_j$ and $ \gamma_j$ are positive real numbers for $j = M,I,E$ and represent the production and clearance of the $j$-th species, respectively. $F[E(t)]$ represents mRNA production driven by either an inducible or repressible operon, with the monotonicity of $F$ determining the type of feedback. As a first example of how to apply Theorem~\ref{Theorem:DistributedDDEReduction} in a cyclic feedback structure, we first reduce \eqref{Eq:LacOperonODE} to a distributed DDE where the effector $E(t)$ population is self-regulating.

Equation \eqref{Eq:LacOperonODE} is precisely in the same form as \eqref{Eq:GenericCharacteristicEquation} for specific choices of $f_I$ and $f_E$. We begin by considering the differential equation for $I(t)$. For
$ f_I(M(t)) = \alpha_I M(t)$,  as in \citep{Goodwin1965}, we have 
\begin{equation*}
\TimeDeriv I(t)  = f_I(M(t)) -\gamma_I I(t).
\end{equation*}
There is no other loss of intermediate proteins, so we can write
\begin{equation}
I(t) = \int_{-\infty}^t \alpha_I M(\phi) e^{-\gamma_I(t- \phi)} \d \phi = \int_{0}^{\infty} \alpha_I M(t-\phi)e^{-\gamma_I \phi} \d \phi .
\label{Eq:GoodwinIntermediateProteinExpression}
\end{equation}
Next, we consider the differential equation for the effector population, $E(t)$ with appearance rate
\begin{equation*}
f_E(I(t)) = \alpha_E I(t) = \alpha_E \int_{-\infty}^t \alpha_I M(\phi) e^{-\gamma_I(t- \phi)} \d \phi = \alpha_E \int_{0}^{\infty} \alpha_I M(t-\phi) e^{-\gamma_I \phi } \d \phi .
\end{equation*}
Once again, $\mu_E=\gamma_E,$ and we write
\begin{equation}
\begin{aligned}
E(t) & = \int_{0}^{\infty}  \alpha_E \underbrace{ \int_{-\infty}^{\theta} \alpha_I  M(\theta - \phi)e^{-\gamma_I(\theta-\phi)} \d \phi }_{I( \theta )} e^{-\gamma_E(t-\theta)} \d \theta .
\end{aligned}
\label{Eq:GoodwinEffectorExpression}
\end{equation}
Having expressed both $I(t)$ and $E(t)$ as functions of $M(t)$ in \eqref{Eq:GoodwinIntermediateProteinExpression} and \eqref{Eq:GoodwinEffectorExpression}, we can write the equivalent distributed DDE for the ODE model \eqref{Eq:LacOperonODE}
\begin{align}
\TimeDeriv M(t) & = F \left[ \int_{0}^{\infty}  \alpha_E  \int_{-\infty}^{\theta} \alpha_I  M(\theta - \phi)e^{-\gamma_I(\theta-\phi)} \d \phi  e^{-\gamma_E(t-\theta)} \d \theta  \right]  - \gamma_M M(t).
\label{Eq:LacOperonDDE}
\end{align}
There is no obvious ageing structure in the chain of enzyme, metabolite and intermediate protein. However, as mentioned, the cascade from metabolite to enzyme to intermediate protein defines a ``cyclic" model structure. In this sense, the metabolite controls it's own expression through \eqref{Eq:LacOperonDDE}. 
\subsubsection{Delayed lac-operon model}
Having shown how to reduce Goodwin's model of repressible dynamics to a scalar distributed DDE, we now consider the reduced Yildirim model of the delayed lac-operon \citep{Yildirim2004}. This model is given by three discrete DDEs
\begin{equation}
\left.
\begin{aligned}
\TimeDeriv M(t) & = F \left[ e^{-\nu_E \tau_M} E(t-\tau_M) \right] - \gamma_M M(t) \\
\TimeDeriv I(t) & = \alpha M(t-\tau_I)e^{-\nu_M \tau_I}- \gamma_I  I(t)\\
\TimeDeriv E(t) & = \alpha_E I(t)-\beta_E I(t)\frac{E(t)}{K_E+E(t)}   - \gamma_E E(t). 
\end{aligned}
\right \}
\label{Eq:YildrimModel}
\end{equation}
The model in equation~\eqref{Eq:YildrimModel} is slightly more complicated due to the presence of discrete delays and the non-linearity in the equation for $E(t)$. Due to the nonlinear Hill term in the differential equation for $E(t)$, we construct the cyclic structure in a different order than for the ODE lac-operon model. We begin with the equation for the dynamics of the metabolite $M(t)$. The metabolite is created as a function of the enzyme concentration at time $t-\tau_M$. Thus, 
\begin{equation*}
f_M \left( \int_0^{\infty} e^{\nu_E s }E(t-s)\delta(s-\tau_M)\d s \right) =  F \left[ e^{-\nu_E \tau_M} E(t-\tau_M) \right].
\end{equation*}
The metabolite is cleared at constant rate, so $\mu_M = \gamma_M$. Using Theorem \ref{Theorem:DistributedDDEReduction}, we obtain
\begin{align}  \label{Eq:MetaboliteDelayedLacOperon} 
M(t) &   = \int_0^{\infty}  F \left[ e^{-\nu_E \tau_M} E(t- \phi-\tau_M) \right]e^{-\gamma_M \phi} \d \phi .
\end{align}

Next, we consider the differential equation for the intermediate proteins $I(t)$. These intermediate proteins are created from the metabolite $M(t)$ following a delay $\tau_I$. The creation rate is
\begin{equation*}
f_I \left( \int_0^{\infty} e^{\nu_M s }I(t-s)\delta(s-\tau_I)\d s \right)  = \alpha_I M(t-\tau_I)e^{-\nu_M \tau_I},
\end{equation*}
There is no state dependent loss of the intermediate proteins, so $\mu_I =\gamma_I, $ so we find
\begin{align}\label{Eq:IntermediateProteinDelayedLacOperon} \notag
I(t)& = \int_{0}^{\infty} \alpha_I M(t-\theta-\tau_I) e^{-\nu_M \tau_I}e^{-\gamma_I(\theta)} \d \theta \\ 
 & = \int_{0}^{\infty} \alpha_I  \left[  \int_{0}^{\infty} F \left[ e^{-\nu_E \tau_M} E(t-\theta - \phi-\tau_I -\tau_M) \right] e^{-\gamma_M \phi}\d \phi \right]  e^{-\nu_M \tau_I} e^{-\gamma_I \theta} \d \theta.
\end{align}


This leaves the differential equation for the effector cells $E(t)$. Using \eqref{Eq:MetaboliteDelayedLacOperon} and \eqref{Eq:IntermediateProteinDelayedLacOperon}, we write the system \eqref{Eq:YildrimModel} as the following scalar distributed DDE
\begin{equation}
\left. 
\begin{aligned}
\TimeDeriv E(t) & = \int_{-\infty}^t \alpha_I  \left[  \int_{-\infty}^{\theta-\tau_I} F \left[ e^{-\nu_E \tau_M} E(\phi-\tau_M) \right] e^{-\gamma_M(\theta-\tau_I-\phi)} \d \phi \right] e^{-\nu_E \tau_I}e^{-\gamma_I (t-\theta)}  \d \theta \\
& {} \times \left[ \alpha_E -\beta_E \frac{E(t)}{K_E+E(t)}\right]  - \gamma_E E(t).
\end{aligned}
\right \}
\label{Eq:YildrimModelReduction}
\end{equation}


\subsubsection{Linearisation of the delayed lac-operon model}

Bistability in the lac-operon has been extensively studied and so we give an implicit expression for the equilibria solutions of \eqref{Eq:YildrimModelReduction}. Equation \eqref{Eq:YildrimModel} is a discrete DDE, so the canonical choice for the phase space is $\mathcal{C}( -\max[\tau_i,\tau_M],0)$ and equilibrium solutions are constant functions. Thus, we assume that $E(t)=E^*$ is a constant function and search for values of $E^*$ such that:
\begin{align*}
\gamma_E E^* =  \left( \alpha_E - \beta_E\frac{E^*}{K_E+E^*} \right) \left( \frac{\alpha_I}{\gamma_I}\right) \frac{F \left[ e^{-\nu_E \tau_M} E^* \right]}{\gamma_M} e^{-\nu_E \tau_I} .
\end{align*}

We use \eqref{Eq:IntermediateProteinDelayedLacOperon} to define 
\begin{align}
\bar{E}(t) & =  \int_{0}^{\infty} \alpha_I \left[  \int_{0}^{\infty} F \left[ e^{-\nu_E \tau_M} E(t-\theta - \phi-\tau_I -\tau_M) \right]e^{-\gamma_M \phi} \d \phi \right]  e^{-\nu_M \tau_I} e^{-\gamma_I \theta} \d \theta
\label{Eq:EbarYildrimModel}
\end{align}
Therefore, \eqref{Eq:YildrimModelReduction} becomes
\begin{equation*}
\TimeDeriv E(t) = \left( \alpha_E - \beta_E\frac{E(t)}{K_E+E(t)} \right) \bar{E}(t)-\gamma_E E(t),
\end{equation*}
and, after evaluating \eqref{Eq:EbarYildrimModel} at an equilibrium solution $E^*$, we must have
\begin{equation*}
\left( \alpha_E - \beta_E\frac{E^*}{K_E+E^*} \right)  \left( \frac{\alpha_I}{\gamma_I} \right)  \frac{F(e^{-\nu_E \tau_M}E^*)}{\gamma_M} e^{-\nu_M \tau_I} = \alpha_E \bar{E}^*  = \gamma_E E^*.
\end{equation*}

To study the behaviour of solutions near the equilibria $E^*$, we center the equilibria at the origin by considering $x(t) = E(t)-E^*.$  Now, as the decay rates are constant, it is straightforward to complete the linearisation by considering Taylor expansions. We present the details of this calculation rather than computing Fr\'{e}chet derivatives as in Section~\ref{Sec:ScalarDDEProperties}. 


It is natural to define 
\begin{equation}
\bar{x}(t) = \int_{0}^{\infty}  \alpha_I   \left[  \int_{0}^{\infty} \ F \left[ e^{-\nu_E \tau_M} (x(t-\theta - \phi-\tau_I -\tau_M) ) \right]e^{-\gamma_M \phi} \d \phi \right]  e^{-\nu_M \tau_I} e^{-\gamma_I \theta} \d \theta.
\label{Eq:XBarDefinition}
\end{equation}
and Taylor expanding $F(x(t)+E^*)$ about the equilibrium point $E^*$ gives 
\begin{align*}
\bar{x}(t) & = \left( \frac{\alpha_I}{\gamma_I} \right) \frac{F(e^{-\nu_E \tau_M}E^*)}{\gamma_M} e^{-\nu_M \tau_I} +  \int_{0}^{\infty}  \alpha_I  \left[  \int_{0}^{\infty}   \partial_xF(e^{-\nu_E \tau_M}E^*)\left[ x(t-\theta - \phi-\tau_I -\tau_M ) \right]  \right. \\ \notag
& \quad {} \times \left. e^{-\gamma_M \phi}\d \phi \right]  e^{-\nu_M \tau_I} e^{-\gamma_I \theta} \d \theta + \mathcal{O}(|x(t)|^2).
\end{align*}
Inserting the ansatz $x(t) = Ce^{\lambda t}$ and find
\begin{align*}
\bar{x}(t) & = \frac{F(e^{-\nu_E \tau_M}E^*)}{\gamma_M} e^{-\nu_M \tau_I} + Ce^{\lambda t} \frac{ \partial_xF(e^{-\nu_E \tau_M}E^*)   \alpha_I} e^{-\nu_M \tau_I}   e^{-\lambda(\tau_I+\tau_M)}   \\
& \quad {} \times \int_{0}^{\infty}  e^{-\lambda \theta} e^{-\gamma_I \theta} \d \theta  \int_{0}^{\infty}  e^{-\lambda \phi} e^{-\gamma_M \phi}\d \phi + \mathcal{O}(|x(t)|^2),
\end{align*}
so, after dropping non-linear terms, the differential equation for $x(t)$ is 
\begin{align} \notag  
\lambda x(t)  & =   \left[ \left(  \partial_xF(e^{-\nu_E \tau_M}E^*)e^{-\nu_M \tau_I} \gamma_M \right)  \mathcal{L}[\alpha_I](\gamma_I + \lambda)\mathcal{L}[ e^{-\lambda(\tau_I+\tau_M)}](\gamma_M+ \lambda) \right.  \\
& {} \quad \left. \times \left( \alpha_E -  \frac{\beta_E E^*}{K_E+E^*} \right)  - \left( \frac{\beta_E \bar{E}^* K_E}{(K_E+E^*)^2} + \gamma_E \right)  \right] x(t) .
\label{Eq:LinearisedDifferentialEquation}
\end{align}

After evaluating the Laplace transforms, dividing by $x(t) =Ce^{\lambda t}$, and using a common denominator, we obtain the characteristic equation corresponding to \eqref{Eq:LinearisedDifferentialEquation}
\begin{align*}
0 & = \left( \lambda+ \frac{\beta_E \bar{E}^* K_E}{(K_E+E^*)^2} + \gamma_E \right) (\lambda + \gamma_I)(\lambda+\gamma_M) \\
& \quad {} - \left( \alpha_E  - \beta_E\frac{E^*}{K_E+E^*} \right) \alpha_I \partial_xF(e^{-\nu_E \tau_M}E^*)e^{-\nu_M \tau_I}  e^{-\lambda(\tau_I+\tau_M)}.
\end{align*}
%
which is exactly the characteristic equation found by \citep{Yildirim2004} (after  undoing their nondimensionalization). Thus, we have shown how to reduce a system of three discrete DDEs to a scalar differential equation and have computed the characteristic equation without computing Jacobian matrices or determinants.

\subsection{Compartmental white blood cell model} \label{Sec:Hematopoiesis}

The human hematopoietic system is responsible for blood cells production and is tightly regulated by circulating cytokine concentrations. This cytokine control of blood cell production, maturation and release ensures that the hematopoietic system is able to respond to challenges such as infection, blood loss and hypoxemia. There has been extensive interest in mathematical modelling of the control mechanisms underlying the regulatory control of the hematopoietic system \citep{Pujo-Menjouet2016,Mackey1978}. In general, a circulating population of blood cells controls the production of precursors through a negative feedback loop mediated by cytokine signalling. In the absence of exogeneous cytokine administration, it is common to use a quasi-steady state approximation to discard a model for the cytokine signalling and simply use the circulating concentration of blood cells to control precursor production. Accordingly, these models typically exhibit the form of \eqref{Eq:GenericCyclicDifferentialEquation}. 

The production of neutrophils, the most common type of white blood cell in humans, has been extensively modelled over the past half century \citep{Pujo-Menjouet2016,Mackey1977,Rubinow1975,Dale2015}. Neutrophil precursors progress through a number of distinct proliferation and maturation stages before entering a reservoir of mature cells in the bone marrow and passing into circulation. It is common to model each of these stages separately, leading to a system of ODEs \citep{Rubinow1975,Roskos2006,Quartino2014,VonSchulthess1982}. Consequently, these models can be transformed to a distributed DDE through the LCT \citep{deSouza2017,Cassidy2018a}, where the distributed delay represents the time required for nascent neutrophil precursors to pass from the hematopoietic stem cell populations through proliferation and maturation before reaching circulation.

Marciniak-Czochra et al. \citep{Marciniak-Czochra2009} introduced a compartmental model of hematopoietic stem cell regeneration that has since been adapted to study bone marrow transplantation, resistance to therapy in leukemia, and other disorders of the hematopoietic system. Recently, the model was thoroughly analysed for two compartments in \citep{Getto2013}, who showed that the homeostatic equilibrium point is globally stable when it exists.

In a recent article, Knauer et al.\citep{Knauer2020} proposed a multi-compartment model for white blood cell production and demonstrated the existence of a super-critical Hopf bifurcation that leads to oscillatory circulating blood concentrations, similar to those observed in cyclic neutropenia \citep{Mir2020,Wright1981,Guerry1973,Dale1988}. Interestingly, the super-critical Hopf bifurcation and resulting periodic orbit results from the inclusion of a multi-stage maturation process \citep{Knauer2020}, and is not present in a similar model without the multiple maturation stages \citep{Getto2013}. This multi-stage maturation process results in the multi-compartment nature of the Knauer et al.\citep{Knauer2020} model, where each compartment corresponds to a distinct stage in the differentiation process. As the authors mention, these multi-compartment models have a long history in modelling cyclic neutropenia, and typically are structured to implicitly (or explicitly) induce a delay in the feedback. The Knauer et al. \citep{Knauer2020} model is the following three compartment model

\begin{equation}
\left.
\begin{aligned}
\TimeDeriv u_1(t) & = \left( 2\frac{a_1}{1+ku_3(t)}-1\right) p_1u_1(t) \\
\TimeDeriv u_2(t) & =  \left( 2\frac{a_2}{1+ku_3(t)}-1\right) p_2u_2(t) + 2 \left( 1- \frac{a_1}{1+ku_3(t)} \right)p_1u_1(t) \\
\TimeDeriv u_3(t) & = 2 \left( 1-\frac{a_2}{1+ku_3(t)} \right) p_2u_2(t) - d_3u_3(t).   
\end{aligned}
\right \}
\label{Eq:KnauerODEModel}
\end{equation}

Here, we show that the maturation stage in the compartmental model \eqref{Eq:KnauerODEModel} acts as to impose a distributed delay, and we reduce the system to a couple ODE and distributed DDE. This is a departure from earlier examples in which we completely reduced the system to a scalar distributed DDE. We note that the complete reduction is in fact possible for \eqref{Eq:KnauerODEModel}, but with an interesting complication: the differential equation for $u_1$ is linear in $u_1$ with $f_1(u_3(t)) = 0$. Consequently, the scalar DDE for $u_3$ explicitly depends on the initial condition $u_1(0)$. This explicit dependence on initial conditions is different than the preceding analysis and examples, and has a simple biological explanation: $u_1(0)$ represents the initial population of hematopoietic stem cells, from which white blood cells arise. 

In the previous examples, the feedback loop closes as the final stage drives control of the first. However, in \eqref{Eq:KnauerODEModel}, the hematopoietic stem cells $u_1$ begin the chain and are only  produced through self-renewal of the existing stem cell population. Thus, the circulating concentration of white blood cells will influence the growth or decay rate of the HSCs but cannot independently drive the production of new hematopoietic cells without HSC self-renewal. Therefore, we reduce \eqref{Eq:KnauerODEModel} to the a system for the HSC population and the circulating neutrophil concentration by replacing the intermediate compartment $u_2$ with a distributed delay, which leaves a system of equations for $u_1$ and $u_3$.

In \eqref{Eq:KnauerODEModel}, the effective proliferation rate of cells in compartment $i$ is given by $p_i$, with a fraction 
\begin{align*}
\left( 2\frac{a_i}{1+ku_3(t)}-1\right),
\end{align*}
of these cells self-renewing and remaining in the $i$-th compartment, while the remaining fraction
\begin{align*}
2\left( 1-\frac{a_i}{1+ku_3(t)} \right),
\end{align*}
progress to the subsequent compartment. Finally, mature cells are cleared from circulation linearly at a rate $d_3$. 

We begin with the differential equation for $u_2$
\begin{align*}
\TimeDeriv u_2(t) & = 2\left( 1- \frac{a_1}{1+k u_3(t)} \right) p_1 u_1(t) + \left( \frac{2a_2}{1+ku_3(t)} -1 \right) p_2u_2(t),
\end{align*}
and note that this differential equation has precisely the form of \eqref{Eq:GenericCharacteristicEquation} with
\begin{align*}
f_2 = 2\left( 1- \frac{a_1}{1+k u_3(t)} \right) p_1 u_1(t) \quad \textrm{and} \quad
\mu =   p_2 \left( \frac{2a_2}{1+ku_3(t)} -1 \right). 
\end{align*}
Thus, it follows that 
\begin{align}
u_2(t) = \int_0^{\infty} 2\left( 1- \frac{a_1}{1+k u_3(t-\sigma )} \right) p_1 u_1(t-\sigma) \exp \left[ p_2 \int_{t-\sigma}^t  \left( \frac{2a_2}{1+ku_3(x)} -1 \right) \d x \right] \d \sigma.
\label{Eq:KnauerU2Expression}
\end{align}

To facilitate the following computations, let
\begin{align*}
h_1(y) = 2p_1\left( 1- \frac{a_1}{1+ky} \right) \quad \textrm{and}
\quad h_2(y) = p_2\left( \frac{2a_2}{1+ky} -1 \right) 
\end{align*}
so that $h_2(u_3^*) = p_2 (a_2/a_1 -1)  < 0$ and $h_1(u_3^*) = p_1. $ Then, we can write \eqref{Eq:KnauerU2Expression} as
\begin{align*}
u_2(t) = \int_0^{\infty} h_1(u_3(t-\sigma))(u_1(t-\sigma))  \exp \left[   \int_{t-\sigma}^t  h_2(u_3(x)) \d x \right]\d \sigma ,
\end{align*}
and the Knauer et al. \citep{Knauer2020} model then reduces to 
\begin{equation}
\left. 
\begin{aligned}
\TimeDeriv u_1(t) & = \left( 2\frac{a_1}{1+ku_3(t)}-1\right) p_1u_1(t) \\
\TimeDeriv u_3(t) & = \left( \int_0^{\infty} h_1(u_3(t-\sigma))(u_1(t-\sigma))  \exp \left[   \int_{t-\sigma}^t  h_2(u_3(x)) \d x \right]\d \sigma \right) \\ 
& {} \quad  \times 2 p_2 \left( 1-\frac{a_2}{1+ku_3(t)} \right)   - d_3u_3(t).   
\end{aligned}
\right \}
\label{Eq:Knauer2CompartmentDDEModel}
\end{equation} 

In the preceding calculation, we have implicitly assumed that $u_1(0) \neq 0$. Now, if $u_1(0) =0$, then $u_1(t) = 0$ for all $t>0$ and the 3 compartment \eqref{Eq:KnauerODEModel} becomes

\begin{equation}
\left.
\begin{aligned}
\TimeDeriv u_2(t) & =  \left( 2\frac{a_2}{1+ku_3(t)}-1\right) p_2u_2(t) \\
\TimeDeriv u_3(t) & = 2 \left( 1-\frac{a_2}{1+ku_3(t)} \right) p_2u_2(t) - d_3u_3(t).   
\end{aligned}
\right \}
\label{Eq:SingularKnauerODEModel}
\end{equation} 
Then, the preceding discussion regarding the biological interpretation of $u_1(0)$ for \eqref{Eq:KnauerODEModel} can be repeated verbatim for \eqref{Eq:SingularKnauerODEModel} but now with $u_2(0)$.  

\subsubsection{Equilibria and linearisation}

In this form, the equilibria solutions are the constant functions $(u_1(t),u_3(t)) = (u_1^*,u_3^*)$ such that the right hand side of \eqref{Eq:Knauer2CompartmentDDEModel} is zero. Immediately, we see that
\begin{equation*}
u_1^* = 0 \quad \textrm{or} \quad u_3^* = \frac{2a_1-1}{k}.
\end{equation*}

Using the equilibrium value of $u_3^*$, the non-zero equilibria value of $u_1^*$ is given by
\begin{align*}
d_3 u_3^* & =  4 p_1p_2 \left( 1- \frac{a_1}{1+k u_3^*} \right) \left( 1-\frac{a_2}{1+ku_3^*} \right)  u_1^*  \int_0^{\infty} \exp \left[ p_2  \left( \frac{2a_2}{1+ku_3^*} -1 \right)\sigma  \right] \d \sigma \\
& = 2 p_1 \left( 1-\frac{a_2}{2a_1} \right) u_1^* \int_0^{\infty} p_2  \exp \left[- p_2  \left( 1 -  \frac{a_2}{a_1)} \right)\sigma  \right] \d \sigma = \frac{ p_1 \left( 2-\frac{a_2}{a_1} \right) u_1^* }{1 -  \frac{a_2}{a_1} },
\end{align*} 
which is precisely the value found by \citep{Knauer2020} and only exists if $a_2< a_1$.

Now, to linearise about the equilibrium point, consider $z(t) = u(t)-u^*$ and, for $F$ given by the right hand side of  \eqref{Eq:Knauer2CompartmentDDEModel}, we obtain the differential equation for $z$
\begin{align*}
\TimeDeriv z(t) = \TimeDeriv u(t) = F(u^*+z(t))
\end{align*} 
We begin with the computation of the linearisation of the delayed term 
\begin{align*}
\int_0^{\infty} h_1(u_3^*+ z_3(t-\sigma))(u_1^*+z_1(t-\sigma))  \exp \left[   \int_{t-\sigma}^t  h_2(u_3^*+z_3(x)) \d x \right].
\end{align*}
Taylor expanding the above expression in $z_1$ and $z_3$ gives 
\begin{align*}
& \int_0^{\infty} [h_1(u_3^*)+h_1'(u_3^*)z_3(t-\sigma)](u_1^*+z_1(t-\sigma)) e^{ h_2(u_3^*)\sigma } \left( 1+ \int_{t-\sigma}^t h'_2(u_3^*)z_3(x)\d x  \right) \d \sigma + \mathcal{O}(z^2) \\
& {} \quad = \frac{h_1(u_3^*)u_1^* }{h_2(u_3^*)} + \int_0^{\infty} [h_1'(u_3^*)u_1^*z_3(t-\sigma) + h_1(u_3^*)z_1(t-\sigma) ]e^{h_2(u_3^*)\sigma} \d \sigma \\
& {} \quad  + \int_0^{\infty} h_1(u_3^*)u_1^* e^{h_2(u_3^*)\sigma} \left[ \int_{t-\sigma}^t h'_2(u_3^*) z_3(x) \d x \right]  \d \sigma + \mathcal{O}(z^2).
\end{align*}

We note that the Fr\'{e}chet derivative of a linear operator is the operator itself. Thus, to simplify notation, we discard the non-linear terms and insert the ansatz $z(t) = ce^{\lambda t}$ to find
\begin{align} \notag
& \frac{h_1(u_3^*)u_1^* }{h_2(u_3^*)} + \int_0^{\infty} [h_1'(u_3^*)u_1^*z_3(t-\sigma) + h_1(u_3^*)z_1(t-\sigma) ]e^{h_2(u_3^*)\sigma} \d \sigma \\ \notag
& {} \quad  + \int_0^{\infty} h_1(u_3^*)u_1^* e^{h_2(u_3^*)\sigma} \left[ \int_{t-\sigma}^t h'_2(u_3^*) z_3(x) \d x \right]  \d \sigma + \mathcal{O}(z^2) \\ \notag
& {} \quad = \frac{h_1(u_3^*)u_1^* }{h_2(u_3^*)} + \mathcal{L}[h_1'(u_3^*)u_1^*](\lambda+ h_2(u_3^*))z_3(t)  + \mathcal{L}[ h_1(u_3^*)](\lambda+ h_2(u_3^*))z_1(t) \\
& {} \quad  + \int_0^{\infty} h_1(u_3^*)u_1^* e^{h_2(u_3^*)\sigma} \left[ \int_{t-\sigma}^t h'_2(u_3^*) z_3(x) \d x \right]  \d \sigma.
\label{Eq:KnauerExpansionIntegral}
\end{align}
Using the ansatz $z_3(t) = ce^{\lambda t }$, we can easily calculate
\begin{equation*}
\int_{t-\sigma}^t h'_2(u_3^*) z_3(x) \d x = h'_2(u_3^*) \left( \frac{z_3(t)-z_3(t-\sigma)}{\lambda}\right) . 
\end{equation*}
Inserting this into \eqref{Eq:KnauerExpansionIntegral} then gives
\begin{align*}
& \int_0^{\infty} h_1(u_3^*)u_1^* e^{h_2(u_3^*)\sigma} \left[ \int_{t-\sigma}^t h'_2(u_3^*) z_3(x) \d x \right]  \d \sigma  = \left( \frac{h_1(u_3^*)u_1^*  h'_2(u_3^*)}{\lambda} \right) \left( \frac{1}{h_2(u_3^*)} - \frac{1}{h_2(u_3^*)+\lambda} \right)
\end{align*}
which, after using a common denominator and simplifying, gives
\begin{align*}
\int_0^{\infty} h_1(u_3^*)u_1^* e^{h_2(u_3^*)\sigma} \left[ \int_{t-\sigma}^t h'_2(u_3^*) z_3(x) \d x \right]  \d \sigma   & = \frac{ h_1(u_3^*)u_1^*  h'_2(u_3^*)}{h_2(u_3^*)}  \left(  \frac{1}{h_2(u_3^*)+\lambda} \right) \\
& = {} \mathcal{L}\left[ \frac{ h_1(u_3^*)u_1^*  h'_2(u_3^*)}{h_2(u_3^*)} \right] (h_2(u_3^*)+\lambda).
\end{align*}

Thus, the linear differential equation for $z_3(t)$ is
\begin{align*}
\TimeDeriv z_3(t) & = -d_3(z_3(t)+u_3^*) + \left[ 2 p_2 \left( 1-\frac{a_2}{1+ku_3^*} \right) + 2p_2  \frac{ka_2}{(2a_1)^2}z_3(t) + \mathcal{O}(z^2)  \right] \\
& \quad {} \times \left( \frac{h_1(u_3^*)u_1^*}{p_2(1-a_2/a_1)} + \mathcal{L} \left[ h_1'(u_3^*)u_1^*+\frac{ h_1(u_3^*)u_1^*  h'_2(u_3^*)}{h_2(u_3^*)} \right](\lambda + p_2(1-a_2/a_1) )z_3(t) \right. \\
& \quad \left.   + \mathcal{L}[h_1(u^*)](\lambda + p_2(1-a_2/a_1) )z_1(t) + \mathcal{O}(z^2)    \right) \\ 
& {} = \left( -d_3 +  2p_2  \frac{ka_2}{(2a_1)^2} \frac{h_1(u_3^*)u_1^*}{p_2(1-a_2/a_1)} \right.  \\
& {}  \quad \left. +  2 p_2 \left( 1-\frac{a_2}{1+ku_3^*} \right) \mathcal{L} \left[ h_1'(u_3^*)u_1^*+\frac{ h_1(u_3^*)u_1^*  h'_2(u_3^*)}{h_2(u_3^*)} \right](\lambda + p_2(1-a_2/a_1) ) \right) z_3(t) \\
& \quad {} + \left( 2 p_2 \left( 1-\frac{a_2}{1+ku_3^*} \right) \mathcal{L}[h_1(u^*)](\lambda + p_2(1-a_2/a_1) \right) z_1(t).
\end{align*}

From which we get the linearised differential equation for $z(t)$
\begin{align*}
\TimeDeriv z(t) = A z(t)
\end{align*}
 where the linearisation matrix $A$ is given by 
\begin{equation*}
A(\lambda) = \left[ 
\begin{array}{cc}
0 & \left( 1- \frac{1}{2a_1}\right)\frac{d_3}{2-a_2/a_1} \left( 1- a_2/a_1\right) \\
  p_2 \left( 2 -\frac{a_2}{a_1} \right) \mathcal{L}[h_1(u^*)]( \lambda + p_2(1-a_2/a_1) )  & d_3 \left[ \left(1-\frac{1}{2a_1}\right)\frac{a_2}{a_1}\frac{1}{2-\frac{a_2}{a_1} }-1\right]  + A_{22}(\lambda)\\
\end{array}
\right]
\end{equation*}
where
\begin{align*}
A_{22}(\lambda) &  =  2 p_2 \left( 1-\frac{a_2}{1+ku_3^*} \right)  \mathcal{L} \left[ h_1'(u_3^*)u_1^*+\frac{ h_1(u_3^*)u_1^*  h'_2(u_3^*)}{h_2(u_3^*)} \right](\lambda + p_2(1-a_2/a_1) ).
\end{align*}

Following \citep{Knauer2020} and rescaling time by $\hat{t} = tp_1,$ we have $h_1(u_3^*) = 1$, and we simplify
\begin{align*}
A_{22}(\lambda)  = p_2d_3\left(1-2\frac{a_2}{a_1} \right) \left(1- \frac{1}{2a_1} \right).
\end{align*}


Then, computing $\det\left[ \lambda I - A \right]$ gives the same characteristic equation as was found in  \citep{Knauer2020}
\begin{align*}
0 & = \lambda^3 + \left[ \left( 1-\frac{a_2}{a_1} \right) p_2 +  \left( 1-\frac{a_2}{a_1} \right) \left( 1-\frac{1}{2a_1} \right) \frac{1}{2-\frac{a_2}{a_1} }  \right] \lambda^2  \\
& {} + \left[ \left( 1-\frac{a_2}{a_1} \right) \left( 1 - \frac{a_2}{a_1} \right) \left( 1-\frac{1}{2a_1}\right) \frac{1}{2-\frac{a_2}{a_1}}  - \left( 1-\frac{1}{2a_1} \right) \left( 1-2\frac{a_2}{a_1} \right) \right] d_3 p_2 \lambda \\
& {} +\left( 1-\frac{1}{2a_1} \right) \left( 1-2\frac{a_2}{a_1} \right) d_3 p_2. 
\end{align*}

\subsubsection{Biological Interpretation}

Oscillations in mathematical models of hematopoiesis have been extensively studied, with cyclic neutropenia being a canonical example of a ``dynamical disease.'' Mathematical models of these diseases often share a recipe of delayed feedback leading to oscillations. Here, we show that the Knauer et al.\citep{Knauer2020} model also shares this framework. This is particularly interesting, as the in-depth anaylsis of Getto et al. \citep{Getto2013} demonstrates that the Knauer et al. \citep{Knauer2020} model without the maturation compartment \textit{cannot} produce oscillatory solutions. Conversely, the multistage compartment model in \eqref{Eq:KnauerODEModel} undergoes a Hopf bifurcation and produces solutions that compare favourably with observed data from patients with cyclic neutropenia. Thus, it appears that the inclusion of a delay between signal and response in the feedback loop is necessary, at least in this model formulation, to recapture the oscillatory dynamics observed in the hematopoietic system.

\section{Conclusion}

In this work, we have formalized the relationship between cyclic differential equations and distributed DDEs. This relationship is well-known in the case of transit compartment models as  the \textit{linear chain technique,} and has been shown to lead to state dependent distributed DDEs in the variable transit rate case \citep{Cassidy2018a}. However, both of these equivalences require linear transit between compartments. At the heart of the LCT is the ability to write down a closed form integral solution of the transit compartment model. Here, we use the same idea in a more general setting to establish the equivalence between more general cyclic differential equations and distributed DDEs by writing an integral form solution of the transit compartments. In essence, we demonstrate how sequentially solving the transit compartment system naturally leads to a scalar distributed DDE.

The reduction of a generic cyclic model to a scalar distributed DDE has a number of advantages. Mathematically, determining the existence of equilibria in $n$ dimensional systems typically requires solving $n$ simultaneous equations, and it is, in general, difficult to determine if the equilibrium point is unique. Conversely, both de Souza et al. and Cassidy et al.  demonstrate that the distributed DDE formulation of transit compartment models can be more tractable to analytical techniques \citep{deSouza2017,Cassidy2018a}.  For example, once an equilibrium point has been found, studying the local stability of an equilibrium involves the calculation of the eigenvalues of the $n \times n $ Jacobian matrix. Consequently, if modelling biological data indicates the need for the inclusion of an additional intermediate modelling stage, it is necessary to effectively recalculate the now $(n+1) \times (n+1)$ Jacobian matrix and it's eigenvalues from scratch.  Conversely, when working with the equivalent scalar distributed DDE, we can use tools from single variable calculus such as the intermediate value theorem to determine the existence and uniqueness of equilibria. Further, studying the local stability of these equilibria corresponds to calculating a single Fr\'{e}chet derivative. As we have shown, this calculation replaces the calculation of the determinant of the $n \times n$ Jacobian matrix with the chain rule of Fr\'{e}chet derivatives, and is much more amendable to the inclusion of new modelling stages.

Biologically, the scalar distributed DDE explicitly identifies delays between signal and response that are otherwise hidden in the equivalent cyclic system. Moreover, each intermediate stage represents another quantity that should be compared to data when validating a mathematical model. However, these intermediate stages are either often difficult to measure or do not represent specific physiological compartments. To emphasize this point, we considered two examples that represent biological systems without obvious delays, and showed that identifying the otherwise hidden delays can suggest necessary model ingredients to recapture biological phenomena, as in the Section~\ref{Sec:Hematopoiesis}. Conversely, when considering the equivalent scalar distributed DDE, the model output may be easier to compare against biological data. In a related point, using the scalar distributed DDE formulation can alleviate non-biological modelling assumptions. For example, using a transit compartment ODE model to replace a distributed DDE imposes a non-biological constraint on the delayed process. Namely, imposing that the delayed process be Erlang distributed constrains one of the two parameters of the gamma distribution. As the mean and variance of a delayed process precisely determine the shape and scale parameters of the gamma distribution, imposing that the shape parameter is an integer leads to an over determined system for the remaining scale parameter. For example, when modelling the duration of the cell cycle using an Erlang distributed DDE, modellers can capture the mean or the variance of the delayed process, but not generally both \citep{Cassidy2019}. This limitation can be alleviated when using the more general distributed DDE.

In summary, we formalize the equivalence between cyclic systems of differential equations with delay and scalar distributed DDEs. However, the distributed DDE formulation of cyclic models has some limitations. The most striking of these is the lack of established numerical techniques for the simulation and bifurcation analysis of infinite delay models, although recent work has alleviated this limitation somewhat \citep{Gyllenberg2018,Diekmann2020c} Nevertheless, the equivalence established in this work allows researchers to study the the mathematical model in whichever form is most convenient, and may elucidate otherwise hidden delayed processes. 


\section*{Acknowledgments}
I am grateful to Tony Humphries, Morgan Craig, and Michael C. Mackey for helping shape this manuscript. This work was partially funded by a NSERC PGS-D award. Portions of this work were performed under the auspices of the U.S. Department of Energy under contract 89233218CNA000001 and funded by NIH grants R01-AI116868 and R01-OD011095.


\end{document}